\documentclass[11pt,a4paper]{article}

\usepackage[T1]{fontenc}
\usepackage[utf8]{inputenc}
\usepackage{lmodern}

\usepackage[a4paper,margin=1in]{geometry}

\usepackage{amsmath,amssymb,amsfonts,amsthm,mathtools}
\usepackage{latexsym}

\usepackage{graphicx}
\usepackage{subcaption}
\usepackage{multirow,array}
\usepackage{float}
\usepackage{changepage}

\usepackage{xcolor}
\usepackage{hyperref}

\hypersetup{
    colorlinks=true,
    linkcolor=blue,
    citecolor=blue,
    urlcolor=blue
}

\newcommand{\RR}{\mathbb{R}}

\allowdisplaybreaks

\newtheorem{theorem}{Theorem}[section]
\newtheorem{lemma}[theorem]{Lemma}
\newtheorem{proposition}[theorem]{Proposition}

\numberwithin{equation}{section}

\title{
On the Convergence of a Spline Collocation Method for Nonlinear Fractional Boundary Value Problems with the Riesz--Caputo Operator
}

\author{
Chiara Sorgentone\thanks{
Department of Basic and Applied Sciences for Engineering,
Sapienza University of Rome, Italy.
Email: chiara.sorgentone@uniroma1.it
}
\and
Enza Pellegrino\thanks{
Department of Industrial and Information Engineering and Economics,
University of L'Aquila, Italy.
Email: enza.pellegrino@univaq.it
}
\and
Francesca Pitolli\thanks{
Department of Basic and Applied Sciences for Engineering,
Sapienza University of Rome, Italy.
Email: francesca.pitolli@uniroma1.it
}
}

\date{}

\begin{document}

\maketitle

\begin{abstract}
Fractional boundary value problems are often used to model complex systems and processes characterized by memory effects and anomalous diffusion. In this paper, we consider fractional boundary value problems involving the Riesz--Caputo operator, which is particularly suited for modeling physical phenomena exhibiting symmetric diffusive effects. We provide an integral representation of the solution to prove existence and uniqueness of the fractional differential problem. We introduce a B-spline collocation method to approximate the solution of the problem and provide a convergence analysis, with both theoretical insights and numerical experiments.
\end{abstract}

\noindent
\textbf{Keywords:}
Fractional boundary value problem,
Riesz--Caputo operator,
B-splines,
Collocation method,
Convergence analysis.

\vspace{0.3cm}

\noindent
\textbf{MSC 2020:}
26A33, 35R11, 65M70.

\section{Introduction}
\label{sec1}
Fractional differential equations extend the classical theory of differential equations by incorporating derivatives of non-integer order, providing a powerful tool for modeling memory and hereditary effects in various physical and engineering problems. The growing interest in fractional calculus comes from its ability to more accurately describe complex phenomena where the traditional integer-order models would fail. In contrast to ordinary differential equations, which imply local interactions, fractional differential equations show the ability to capture long-range dependencies and anomalous diffusion, making them highly relevant in fields such as viscoelasticity, fluid dynamics, control theory, signal processing, finance, and many others \cite{BDST16,Hi00,Ma06,Ma10,Za02}. \\
In this paper we will study nonlinear fractional boundary problems that have the Riesz-Caputo derivative as the fractional operator.  Among various fractional operators, the Riesz-Caputo operator plays a crucial role due to its unique ability to model diffusion processes with directional symmetry \cite{MK00}, such as anomalous transport in porous media and turbulence in fluid mechanics. The symmetric property of the Riesz-Caputo operator was also exploited in \cite{Pit:22} to efficiently evaluate the Riesz-Caputo derivative of B-splines, and later applied in \cite{Sor15} to the numerical solution of a linear convection-diffusion problem. Fractional differential equations involving the Riesz-Caputo operator are, however, a rather new research subject, so analytical results on existence and uniqueness of their solutions as well as numerical methods for their approximations are yet limited. In \cite{Di10,KST06},  comprehensive studies of differential problems with the Riemann-Liouville and the Caputo fractional derivatives are provided. Pedas and Tamme have studied both theoretically and numerically different types of linear fractional differential problems based on the Caputo operator, see \cite{Pedas2011,Pedas2011_2,Pedas2012} and references there in. Few years later, the same authors dealt with the nonlinearity for a fractional initial value problem \cite{Pedas2014}. The general theory used to prove the convergence of the numerical method is the one originally developed by Vainikko \cite{Vainikko93,Brunner}.  Non linear fractional differential problems based on the classical Caputo operator have been considered also in \cite{Pitolli_nl1,Pitolli_nl2}.
Our aim is to extend these results to the Riesz-Caputo operator.\\
The paper is organized as follows: In Section \ref{sec2} we introduce the mathematical model along with some useful definitions and few fundamental Lemmas describing analytical properties of the fractional integral and differential operators involved. In the same section we also prove existence and uniqueness of the solution under suitable hypotheses. In Section \ref{sec3} we describe the numerical method and provide a formal convergence analysis.  The theoretical findings are also demonstrated by numerical experiments in Section \ref{sec4}. Finally in Section \ref{sec5} we outline the conclusions.

\section{Mathematical model}
\label{sec2}
We consider the fractional boundary value problem:
\begin{equation}
\label{eq:main_eq}
\left \{ \begin{array}{ll}
 D_{RC}^\alpha u(x)=f(x,u(x)),  \ \ \ x \in [0,L], \\ \\
a_i u^{(i)}(0)+b_iu^{(i)}(L)=c_i, \ \ i=0,..,{\lfloor \alpha \rfloor} ,
\end{array}
\right.
\end{equation}
where $1<\alpha<2$. Here, $D_{RC}^\alpha u(x)$ denotes the Riesz-Caputo fractional derivative defined as \cite{Ag07,Di10,Po99}
\begin{equation}
\label{eq:RCdef}
D_{RC}^\alpha u(x) \coloneqq \frac{1}{2}(D^\alpha_{0,x} + (-1)^m \ D^\alpha_{x,L})u(x),
\end{equation}
where the left and right Caputo derivatives are defined by:
\begin{flalign}
\label{eq:sinistro}
D^\alpha_{0,x} u(x) &\coloneqq \frac{1}{\Gamma(\tilde{m})} \int_0^x \frac{u^{(m)}(\xi)}{(x-\xi)^{1-\tilde{m}}}  d\xi, \\
\label{eq:destro}
D^\alpha_{x,L} u(x)  &\coloneqq \frac{(-1)^m }{\Gamma(\tilde{m})} \int_x^L \frac{u^{(m)}(\xi)}{(\xi-x)^{1-\tilde{m}}}  d\xi,
\end{flalign}
with {$m= \lceil \alpha \rceil$}, $\tilde{m}=m-\alpha$ and $\Gamma$ the Euler gamma function.\\
Note that in definition \eqref{eq:RCdef} we use the constant $\frac{1}{2}$ in front of the combination of the left and right Caputo operator because in this case the Riesz-Caputo derivative agrees with the ordinary derivative when $\alpha$ is an integer. However other choices can be found in literature (see e.g.  \cite{Rieszcos}).
\subsection{Existence and uniqueness of solutions}
\label{subsec2}
We start by analyzing some properties of the Riesz-Caputo operator. 

\begin{lemma} \label{properties} 
Let $\alpha \in \mathbb{R}$, with $1<\alpha<2$ and let $ D_{RC}^\alpha (\cdot)$ denote the Riesz-Caputo operator defined in \eqref{eq:RCdef}. Let $p_i(x)= k x^i$, $i=0,1$, $ k \in \RR$. Then, 
\begin{equation*}
\label{eq:prop_constant}
D_{RC}^\alpha (p_i)=0, \qquad i=0,1.
\end{equation*}
\end{lemma}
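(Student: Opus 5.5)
The plan is to work directly from the definitions \eqref{eq:RCdef}, \eqref{eq:sinistro} and \eqref{eq:destro}. Since $1<\alpha<2$, we have $m=\lceil\alpha\rceil=2$ and $\tilde m = 2-\alpha\in(0,1)$. So both one-sided Caputo derivatives are integrals of the second derivative $u''$ against the weakly singular kernels $(x-\xi)^{\tilde m-1}$ and $(\xi-x)^{\tilde m-1}$. The whole argument therefore reduces to computing $p_i''$ for $i=0,1$ and substituting it into the integrals.

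\textbf{Step 1.} Note that $p_0(x)=k$ and $p_1(x)=kx$ are smooth on $[0,L]$, so both Caputo derivatives are well defined. Moreover $p_i''(\xi)=0$ for every $\xi\in[0,L]$ and $i=0,1$, because the second derivative of a polynomial of degree at most one vanishes.

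\textbf{Step 2.} Substituting into \eqref{eq:sinistro} gives, for every $x\in[0,L]$,
\begin{equation*}
D^\alpha_{0,x}p_i(x)=\frac{1}{\Gamma(\tilde m)}\int_0^x \frac{0}{(x-\xi)^{1-\tilde m}}\,d\xi=0 .
\end{equation*}
In the same way, \eqref{eq:destro} gives $D^\alpha_{x,L}p_i(x)=0$. The kernel singularity at $\xi=x$ is integrable because $1-\tilde m=\alpha-1<1$. In any case the integrand is identically zero, so no convergence issue arises. At the endpoints $x=0$ and $x=L$ the corresponding integral is over a degenerate interval, and it is trivially zero.

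\textbf{Step 3.} By \eqref{eq:RCdef}, $D^\alpha_{RC}p_i=\tfrac12\bigl(D^\alpha_{0,x}p_i+(-1)^m D^\alpha_{x,L}p_i\bigr)=\tfrac12(0+0)=0$, which is the claim.

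I do not expect a genuine obstacle here. The only point requiring care is to justify using $m=2$ in the definitions, namely that the Caputo derivative of order $\alpha\in(1,2)$ involves $u^{(2)}$ and not $u^{(1)}$. This is exactly what annihilates the affine polynomials; with $m=1$ the linear polynomial would not be annihilated. I would state this choice explicitly at the start of the proof.
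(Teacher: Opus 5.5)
Your proposal is correct and matches the paper's own one-line argument: since $m=2$ for $1<\alpha<2$, both one-sided Caputo integrals have integrand $p_i''\equiv 0$, so $D^\alpha_{RC}p_i=0$. Your extra remarks on the integrability of the kernel and on the degenerate endpoint cases are harmless additions.
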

\begin{proof}
To compute the Riesz-Caputo derivative, we have to evaluate the integrals in eqs. \eqref{eq:sinistro}-\eqref{eq:destro}. Since $1<\alpha<2$, in the integrand the second derivative appears, which is zero for both $p_0(x)$ and $p_1(x)$.
\end{proof}

\smallskip

Now, we introduce the fractional left and right Riemann-Liouville integrals of order $\mu$:
\begin{equation}
\label{eq:int_left}
{I}_{0,x}^{(\mu)} u(x) := \frac{1}{\Gamma(\mu)} \int_0^x \frac{u(\xi)}{(x-\xi)^{1-\mu}} d\xi,
\end{equation}
\begin{equation}
\label{eq:int_right}
{I}_{x,L}^{(\mu)} u(x) :=\frac{1}{\Gamma(\mu)} \int_x^L \frac{u(\xi)}{(\xi-x)^{1-\mu}} d\xi,
\end{equation}
for $\mu>0$, and we set ${I}_{0,x}^{0}u = {I}_{x,L}^{0}u = u$, i.e.  ${I}_{0,x}^{0} = {I}_{x,L}^{0} =I$, are the identity operators. The above defined operators are linear and bounded from $L^\infty(0,L)$ into $C(0,L)$ \cite{Brunner,Samko}, and respect the following properties:
\begin{lemma} \label{lemmaprop} \cite{KST06} If $Re \{\alpha\} >0$ and $u(x) \in L_p(0,L), \ 1 \leq p \leq \infty$,  then
\begin{align*}
D^\alpha_{0,x} {I}_{x,0}^{(\alpha)} u(x)=u(x),\\
D^\alpha_{x,L} {I}_{x,L}^{(\alpha)} u(x)=u(x).
\end{align*}
\end{lemma}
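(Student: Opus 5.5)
The plan is to prove the left identity directly from the definitions; the right one then follows by reflection. Here $D^\alpha_{0,x}$ and $D^\alpha_{x,L}$ are the Caputo derivatives \eqref{eq:sinistro}--\eqref{eq:destro}, and ${I}_{x,0}^{(\alpha)}$ is read as ${I}_{0,x}^{(\alpha)}$. Throughout I take $1<\alpha<2$, so $m=2$ and $\tilde m=2-\alpha\in(0,1)$, and I work with real $\alpha$. The Caputo derivative can then be written as the composition
\[
D^\alpha_{0,x}v={I}_{0,x}^{(2-\alpha)}\,v''
\]
whenever $v''$ exists and is integrable. The natural route is to take $v={I}_{0,x}^{(\alpha)}u$, compute $v''$, and then apply the semigroup property of Riemann--Liouville integrals:
\[
{I}_{0,x}^{(\mu)}{I}_{0,x}^{(\nu)}={I}_{0,x}^{(\mu+\nu)},\qquad \mu,\nu>0,
\]
valid on $L_p$. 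This property I would prove by Fubini together with the Beta integral $\int_\xi^x (x-t)^{\mu-1}(t-\xi)^{\nu-1}dt = B(\mu,\nu)(x-\xi)^{\mu+\nu-1}$.

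The key steps, in order, are as follows.
\begin{enumerate}
\item Write ${I}_{0,x}^{(\alpha)}u={I}_{0,x}^{(2)}\,{I}_{0,x}^{(\alpha-2+2)}$ more cleverly, namely ${I}_{0,x}^{(\alpha)}u={I}_{0,x}^{(1)}{I}_{0,x}^{(1)}{I}_{0,x}^{(\alpha-2)}$. This fails because $\alpha-2<0$. The correct decomposition is
\[
{I}_{0,x}^{(\alpha)}u={I}_{0,x}^{(1)}\,{I}_{0,x}^{(\alpha-1)}u, \qquad \alpha-1\in(0,1).
\]
Hence $v' = {I}_{0,x}^{(\alpha-1)}u$ almost everywhere, by the Lebesgue differentiation theorem.
\item Observe that $v'' = \frac{d}{dx}{I}_{0,x}^{(\alpha-1)}u$ is in general not a function for arbitrary $u\in L_p$. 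To avoid it, combine the operators before differentiating. Formally,
\[
{I}_{0,x}^{(2-\alpha)}v''=\frac{d}{dx}\Big({I}_{0,x}^{(2-\alpha)}v'\Big)-\frac{v'(0)\,x^{1-\alpha}}{\Gamma(2-\alpha)} .
\]
Here $v'(0)={I}_{0,x}^{(\alpha-1)}u\big|_{x=0}=0$ for bounded $u$. More generally it vanishes in the sense that ${I}_{0,x}^{(2-\alpha)}{I}_{0,x}^{(\alpha-1)}u={I}_{0,x}^{(1)}u$.
\item Use the semigroup property to get ${I}_{0,x}^{(2-\alpha)}v'={I}_{0,x}^{(1)}u$. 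Differentiating then gives $u$ almost everywhere.
\item For the right identity, apply the reflection $(Qu)(x)=u(L-x)$. This satisfies $Q{I}_{0,x}^{(\mu)}={I}_{x,L}^{(\mu)}Q$ and $QD^\alpha_{0,x}=D^\alpha_{x,L}Q$; the $(-1)^m$ in \eqref{eq:destro} exactly compensates the two sign changes from $d^2/dx^2$ under $Q$. The right identity therefore follows from the left one.
\end{enumerate}

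The main obstacle is step 2: justifying that the Caputo form with $v''$ coincides with the Riemann--Liouville form $\frac{d}{dx}{I}^{(2-\alpha)}v'$, and that the boundary term vanishes. For $u\in L_\infty$ this holds because $|{I}_{0,x}^{(\alpha-1)}u|\le C x^{\alpha-1}\|u\|_\infty\to0$. For $u\in L_p$ with small $p$, $v'(0)$ may not be defined pointwise. In that case I would state the identity as an a.e. identity in the Riemann--Liouville sense, citing \cite{KST06} for the density argument that extends it from smooth $u$ (where every step is classical) to $L_p$ via boundedness of ${I}^{(\mu)}$ on $L_p$.
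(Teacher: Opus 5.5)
The paper gives no proof of its own here; it only cites \cite{KST06}. Your argument has a genuine gap in Step~2, and the statement as written is actually false: with the Caputo derivatives \eqref{eq:sinistro}--\eqref{eq:destro} it does not hold for all $1\le p\le\infty$. Your remark that the boundary term ``vanishes in the sense that ${I}_{0,x}^{(2-\alpha)}{I}_{0,x}^{(\alpha-1)}u={I}_{0,x}^{(1)}u$'' is a non sequitur. The semigroup law always holds, but it says nothing about $v'(0)$.

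Take $u(x)=x^{1-\alpha}$, which lies in $L_p(0,L)$ for every $1\le p<1/(\alpha-1)$. Then ${I}_{0,x}^{(\alpha)}u=\Gamma(2-\alpha)\,x$ is linear, so its second derivative vanishes and $D^\alpha_{0,x}{I}_{0,x}^{(\alpha)}u=0\neq u$. In your notation, $v'\equiv\Gamma(2-\alpha)$, and the term $v'(0)x^{1-\alpha}/\Gamma(2-\alpha)=x^{1-\alpha}$ exactly cancels $\frac{d}{dx}{I}_{0,x}^{(2-\alpha)}v'=u$. The reflected function $(L-x)^{1-\alpha}$ breaks the right identity in the same way. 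So the difficulty for small $p$ is not that $v'(0)$ is undefined; the Caputo identity is simply false there.

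No density argument can repair this. The identity holds for smooth $u$, but the boundary term is not controlled by $\|u\|_{L_p}$ when $p\le 1/(\alpha-1)$. What \cite{KST06} proves for all $L_p$ is the Riemann--Liouville identity $\frac{d^2}{dx^2}{I}_{0,x}^{(2-\alpha)}{I}_{0,x}^{(\alpha)}u=u$ a.e. Your Steps~1 and~3 do establish this for any $u\in L_1$, because $v(0)=0$ makes $\frac{d}{dx}{I}_{0,x}^{(2-\alpha)}v'$ exactly the Riemann--Liouville derivative of $v$. The Caputo counterpart in \cite{KST06} is stated only for $u\in L_\infty$ or $u\in C[0,L]$. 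By H\"older it also holds for $u\in L_p$ with $p>1/(\alpha-1)$, since then $|{I}_{0,x}^{(\alpha-1)}u(x)|\le C\|u\|_{L_p}\,x^{\alpha-1-1/p}\to0$.

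Even for $u\in L_\infty$ your Step~2 is incomplete. The bound $|{I}_{0,x}^{(\alpha-1)}u|\le Cx^{\alpha-1}\|u\|_\infty$ only removes the boundary term. It does not justify your ``formal'' identity, which needs $v'={I}_{0,x}^{(\alpha-1)}u$ to be absolutely continuous. That can fail even for continuous $u$. Take $u(x)=\cos(\lambda x)$ with $\lambda\to\infty$: the derivative of ${I}_{0,x}^{(\alpha-1)}u$ has $L_1$ norm of order $\lambda^{2-\alpha}$ while $\|u\|_\infty=1$. A closed-graph argument then shows ${I}_{0,x}^{(\alpha-1)}$ does not map $C[0,L]$ into absolutely continuous functions. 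When it fails, $v''$ is not an integrable function and the integral in \eqref{eq:sinistro} has no meaning.

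The clean fix is the one used in \cite{KST06}: read the Caputo derivative as $D^\alpha_{0,x}v:=\frac{d^2}{dx^2}{I}_{0,x}^{(2-\alpha)}\bigl[v-v(0)-v'(0)x\bigr]$. This requires only $v(0)$ and $v'(0)$, and it coincides with \eqref{eq:sinistro} whenever $v'$ is absolutely continuous. For $v={I}_{0,x}^{(\alpha)}u$ with $u\in L_\infty$ one has $v(0)=v'(0)=0$, and the claim reduces to the Riemann--Liouville identity you already proved. Alternatively, add regularity such as $u$ of bounded variation, which makes ${I}_{0,x}^{(\alpha-1)}u$ absolutely continuous. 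With the hypothesis restricted to $u\in L_\infty$ (or $L_p$ with $p>1/(\alpha-1)$) and one of these two fixes, your argument goes through, including the reflection in Step~4.
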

\begin{lemma} \label{lemma1} \cite{KST06} Let $Re \{\alpha\} >0$. If $u(x) \in C^m[0,L]$, then the following relations hold:
\begin{align}
\label{eq:ILDL}
I_{0,x}^\alpha\,D_{0,x}^\alpha  u(x)=u(x)-\sum_{l=0}^{m-1} \frac{u^{(l)}(0)}{l!}x^l,\\
\label{eq:IRDR}
I_{x,L}^\alpha\,D_{x,L}^\alpha  u(x)=u(x)-\sum_{l=0}^{m-1} \frac{(-1)^lu^{(l)}(L)}{l!}(L-x)^l.
\end{align}
\end{lemma}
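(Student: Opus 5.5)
The plan is to prove \eqref{eq:ILDL} directly and then get \eqref{eq:IRDR} from it by reflection. Throughout, $m=\lceil\alpha\rceil$ and $\tilde m=m-\alpha$, as in \eqref{eq:sinistro}--\eqref{eq:destro}; for non-integer $\alpha$ we have $\tilde m>0$, and only real $\alpha$ is needed in this paper. The one structural fact used is the semigroup property of Riemann--Liouville integrals, $I_{0,x}^{(\mu)}I_{0,x}^{(\nu)}=I_{0,x}^{(\mu+\nu)}$ for $\mu,\nu>0$. It follows from Fubini's theorem and the Beta integral
\[
\int_\eta^x (x-\xi)^{\mu-1}(\xi-\eta)^{\nu-1}\,d\xi = B(\mu,\nu)\,(x-\eta)^{\mu+\nu-1}.
\]

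\textbf{Left identity.} First, rewrite the left Caputo derivative as $D^\alpha_{0,x}u = I_{0,x}^{(\tilde m)} u^{(m)}$, which is exactly definition \eqref{eq:sinistro}. The semigroup property then gives
\[
I_{0,x}^{(\alpha)} D^\alpha_{0,x} u = I_{0,x}^{(\alpha)} I_{0,x}^{(m-\alpha)} u^{(m)} = I_{0,x}^{(m)} u^{(m)}.
\]
The integer-order integral $I_{0,x}^{(m)}$ is the $m$-fold repeated integral, by Cauchy's formula for repeated integration. Because $u\in C^m[0,L]$, Taylor's theorem with integral remainder states
\[
u(x)=\sum_{l=0}^{m-1}\frac{u^{(l)}(0)}{l!}x^l+\frac{1}{(m-1)!}\int_0^x (x-\xi)^{m-1}u^{(m)}(\xi)\,d\xi .
\]
The last term is precisely $I_{0,x}^{(m)}u^{(m)}(x)$, which proves \eqref{eq:ILDL}.

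\textbf{Right identity.} I will reduce \eqref{eq:IRDR} to the left case by the reflection $v(y)=u(L-y)$. Then $v^{(m)}(y)=(-1)^m u^{(m)}(L-y)$. Substituting $\xi=L-\eta$ in \eqref{eq:destro} should show that
\[
D^\alpha_{x,L}u(x)=D^\alpha_{0,y}v(y)\quad\text{and}\quad I_{x,L}^{(\alpha)}w(x)=I_{0,y}^{(\alpha)}\tilde w(y),
\]
where $y=L-x$ and $\tilde w(y)=w(L-y)$. The factor $(-1)^m$ built into \eqref{eq:destro} is what cancels the sign coming from $v^{(m)}$. Applying \eqref{eq:ILDL} to $v$ then gives
\[
I_{x,L}^{(\alpha)}D^\alpha_{x,L}u(x)=v(y)-\sum_{l<m}\frac{v^{(l)}(0)}{l!}y^l .
\]
Finally, $v(y)=u(x)$, $v^{(l)}(0)=(-1)^l u^{(l)}(L)$ and $y=L-x$, which yields \eqref{eq:IRDR}.

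\textbf{Main obstacle.} The only point that needs care is justifying Fubini in the semigroup step. The integrand is $|u^{(m)}(\xi)|(x-\xi)^{\alpha-1}(\xi-\eta)^{\tilde m-1}$, which is nonnegative up to the bounded factor $|u^{(m)}|$ and has integrable weak singularities because $\alpha>0$ and $\tilde m>0$. Tonelli's theorem therefore applies. Bookkeeping the sign in the reflection step is the other place where errors could occur.
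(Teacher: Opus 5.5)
Your proof is correct, and it is the standard argument behind the result cited from \cite{KST06}; the paper gives no proof of its own. You write $D^\alpha_{0,x}u=I^{(\tilde m)}_{0,x}u^{(m)}$, use the semigroup property to get $I^{(\alpha)}_{0,x}I^{(\tilde m)}_{0,x}=I^{(m)}_{0,x}$, which is exactly the Taylor remainder, and your reflection $v(y)=u(L-y)$ cleanly yields \eqref{eq:IRDR}: the factor $(-1)^m$ in \eqref{eq:destro} is exactly the sign produced by $v^{(m)}$. You leave implicit only integer $\alpha=m$ (where $\tilde m=0$, $I^{(0)}=I$, and the identity is plain Taylor's formula) and complex $\alpha$ (where the same Tonelli bound works with the real parts of the exponents), and neither matters for $1<\alpha<2$.
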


\begin{lemma} \label{samko3083} \cite{Samko}
The solution of the Carleman equation
\begin{equation}
\label{eq:Carleman}
\int_a^b \frac{\varphi(t)}{|x-t|^{1-\beta}}\, dt = g(x), \qquad a \le x \le b,\; 0<\beta<1,
\end{equation}
is given by
\begin{multline}
\label{eq:sol_Carleman}
\varphi(x) = \frac{\sin(\beta \pi)}{\tilde{A}\pi} \frac{d}{dx} \int_a^x \frac{g(t) dt}{(x-t)^\beta} \\
- \frac{1}{\tilde{A}} \left( \frac{\sin(\beta \pi)}{\pi}\right)^2 \frac{d}{dx} \int_a^x \frac{Z(t) dt}{(x-t)^\beta} dt  \frac{d}{dt} \int_a^t \frac{d\tau}{(t-\tau)^{1-\beta}} \int_\tau^b \frac{g(s)ds}{Z(s) (s-\tau)^\beta},
\end{multline}
where
$$\tilde{A}=2+2\cos(\beta \pi)=4\cos^2(\frac{\beta \pi}{2}),$$

$$Z(t)=\left( \frac{b-t}{t-a} \right)^{\beta/2},$$
and $g(x)\in H_\beta^*$, where $H_\beta^*$ denotes the weighted Hölder space of order more than $\beta$.
\end{lemma}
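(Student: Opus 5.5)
Since the statement is quoted from \cite{Samko}, the plan is to reconstruct the classical argument: reduce the Carleman equation to a characteristic singular integral equation with constant coefficients, solve that by the standard Riemann--Hilbert technique, and then translate back.

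\emph{Step 1: rewrite the kernel.} Split the integral at $t=x$. Using the Riemann--Liouville integrals \eqref{eq:int_left}--\eqref{eq:int_right} on $[a,b]$ (with $0$ and $L$ replaced by $a$ and $b$), the equation becomes
\begin{equation*}
\Gamma(\beta)\bigl(I_{a,x}^{(\beta)}\varphi + I_{x,b}^{(\beta)}\varphi\bigr)(x)=g(x).
\end{equation*}
Apply the left Riemann--Liouville derivative of order $\beta$, namely $\frac{d}{dx}I_{a,x}^{(1-\beta)}$; this is the operator that appears in the first term of \eqref{eq:sol_Carleman}. On the first term it acts as a left inverse, in the spirit of Lemma~\ref{lemmaprop}, and returns $\varphi$. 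For the cross term I will use the known composition identity for mixed left/right fractional operators: $I_{a,x}^{(1-\beta)}I_{x,b}^{(\beta)}$ is expressed through the Cauchy singular operator $S\varphi(x)=\frac1{\pi i}\int_a^b\frac{\varphi(t)}{t-x}\,dt$, conjugated by power weights $(t-a)^{\pm\beta}$. Differentiating gives
\begin{equation*}
\cos(\beta\pi)\,\varphi(x)+\sin(\beta\pi)\,(x-a)^{-\beta}\,\frac{1}{\pi}\int_a^b\frac{(t-a)^{\beta}\varphi(t)}{t-x}\,dt .
\end{equation*}
This identity is itself obtained by Fubini and the Beta-integral evaluation $\int_a^b (s-a)^{-\beta}(t-s)^{\beta-1}\ldots$. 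The result is a characteristic equation
\begin{equation*}
(1+\cos\beta\pi)\,\psi(x)+\sin(\beta\pi)\,\frac{1}{\pi}\int_a^b\frac{\psi(t)}{t-x}\,dt=\frac{1}{\Gamma(\beta)}\,(x-a)^{\beta}D^{\beta}_{a}g(x),
\qquad \psi=(x-a)^{\beta}\varphi,
\end{equation*}
with constant coefficients.

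\emph{Step 2: solve the characteristic equation.} This is done by the Gakhov--Muskhelishvili formula. The coefficient quotient is $\frac{1+\cos\beta\pi-i\sin\beta\pi}{1+\cos\beta\pi+i\sin\beta\pi}=e^{-i\beta\pi}$, so the canonical function has endpoint exponents $\pm\beta/2$; this is exactly where $Z(t)=\bigl(\frac{b-t}{t-a}\bigr)^{\beta/2}$ comes from. The normalising constant is
\begin{equation*}
(1+\cos\beta\pi)^2+\sin^2\beta\pi=2+2\cos\beta\pi=\tilde A .
\end{equation*}
With $Z$ as weight, the solution reads
\begin{equation*}
\psi=\frac{1}{\tilde A}\Bigl[(1+\cos\beta\pi)F-\sin\beta\pi\, Z\,S\bigl(Z^{-1}F\bigr)\Bigr],
\end{equation*}
where $F$ is the right-hand side of Step~1. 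Because this canonical function has index zero on the class of solutions considered, there is no free constant and the solution is unique.

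\emph{Step 3: rewrite back.} Express the Cauchy integral again through nested Abel operators, using the same composition identity in reverse, and undo the weights. The term $(1+\cos\beta\pi)F$ together with part of the singular term collapses to $\frac{\sin\beta\pi}{\tilde A\pi}\frac{d}{dx}\int_a^x\frac{g(t)}{(x-t)^{\beta}}\,dt$. The remaining part becomes the double Abel expression in \eqref{eq:sol_Carleman}, involving $Z$ and $\int_\tau^b \frac{g(s)}{Z(s)(s-\tau)^{\beta}}\,ds$, with coefficient $\frac{1}{\tilde A}\bigl(\frac{\sin\beta\pi}{\pi}\bigr)^2$. Finally, substituting the formula back into \eqref{eq:Carleman} confirms that it solves the equation, not merely that any solution must have this form.

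\emph{Main obstacle.} I expect the hardest part to be justifying Steps 1 and 3 rigorously: the mixed-composition identity, the interchange of differentiation with the singular integrals, and identifying the correct endpoint behaviour (the class in which the index is zero). This is what the hypothesis $g\in H^*_\beta$ is for. I would first establish the identities for smooth $g$ vanishing near the endpoints, then extend by density using boundedness of $S$ on weighted Hölder spaces with weights whose exponents lie in $(-1,0)$.
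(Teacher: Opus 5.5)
The paper gives no argument for this lemma; it is quoted from \cite{Samko}. So the only comparison is with the classical derivation, and your Steps 1--2 are that derivation. Your composition identity $\frac{d}{dx}I^{(1-\beta)}_{a,x}I^{(\beta)}_{x,b}\varphi=\cos(\beta\pi)\varphi+\sin(\beta\pi)(x-a)^{-\beta}S[(t-a)^{\beta}\varphi]$ is correct, with $S=\frac{1}{\pi}\,\mathrm{p.v.}\!\int_a^b\frac{\cdot\,dt}{t-x}$. So are the characteristic equation for $\psi=(x-a)^\beta\varphi$, the jump $G=e^{-i\beta\pi}$, and $\tilde A=(1+\cos\beta\pi)^2+\sin^2\beta\pi$.

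\textbf{Step 2 has the weights reversed.} For this $G$, the canonical function of the index-zero class ($\psi$ bounded at $a$, unbounded at $b$) is $\bigl(\frac{x-a}{b-x}\bigr)^{\beta/2}=Z^{-1}$. The formula must therefore read $\psi=\tilde A^{-1}\bigl[(1+\cos\beta\pi)F-\sin(\beta\pi)\,Z^{-1}S(ZF)\bigr]$. Test it on $g\equiv1$: the solution is $\varphi=\frac{\sin(\beta\pi/2)}{\pi}[(x-a)(b-x)]^{-\beta/2}$, so $\psi\propto Z^{-1}$. Your version gives, for the constant $F$, $\psi=\frac{2F}{\tilde A}\bigl[(1+\cos\beta\pi)-\cos(\beta\pi/2)\,Z\bigr]$, which does not solve the equation. $Z$ is the canonical function of the mirror reduction (apply $-\frac{d}{dx}I^{(1-\beta)}_{x,b}$ and set $\psi=(b-x)^\beta\varphi$), not of the one you set up.

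\textbf{The genuine gap is Step 3.} It carries all the content and is only asserted, and when carried out it does not reach \eqref{eq:sol_Carleman} as printed. Write $\mathcal D=\frac{d}{dx}I^{(1-\beta)}_{a,x}$. The two terms of \eqref{eq:sol_Carleman} are $\frac{1}{\tilde A\Gamma(\beta)}\mathcal D g$ and $-\frac{1}{\tilde A\Gamma(\beta)}\mathcal D\bigl[Z\,\frac{d}{dx}I^{(\beta)}_{a,x}I^{(1-\beta)}_{x,b}(Z^{-1}g)\bigr]$. Your identity with $1-\beta$ in place of $\beta$ reads $\frac{d}{dx}I^{(\beta)}_{a,x}I^{(1-\beta)}_{x,b}\phi=-\cos(\beta\pi)\phi+\sin(\beta\pi)(x-a)^{\beta-1}S[(t-a)^{1-\beta}\phi]$. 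Applying it turns the sum into
\[
\frac{1}{\tilde A\Gamma(\beta)}\,\mathcal D\Bigl[(1+\cos\beta\pi)g-\sin(\beta\pi)\,\frac{\rho(x)}{x-a}\,S\Bigl(\frac{(t-a)\,g}{\rho}\Bigr)\Bigr],\qquad \rho(x)=[(x-a)(b-x)]^{\beta/2}.
\]
The true solution (obtainable, for example, by solving the analogous characteristic equation for $I^{(\beta)}_{a,x}\varphi$) has $\rho\,S(g/\rho)$ in that slot instead. The difference is
\[
-\frac{\sin\beta\pi}{\pi\tilde A\Gamma(\beta)}\Bigl(\int_a^b g(s)\rho(s)^{-1}ds\Bigr)\,\mathcal D\bigl[(x-a)^{\beta/2-1}(b-x)^{\beta/2}\bigr],
\]
which behaves like $(x-a)^{-1-\beta/2}$ at $a$ whenever that integral is nonzero.

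You can see this directly for $g\equiv1$:
\begin{itemize}
\item the innermost integral at $\tau=a$ equals $\int_a^b\rho^{-1}>0$, so the $t$-factor behaves like $(t-a)^{\beta-1}$;
\item multiplying by $Z(t)\sim(t-a)^{-\beta/2}$ gives $(t-a)^{\beta/2-1}$;
\item the outer Abel derivative of $(t-a)^{\beta/2-1}$ is a nonzero multiple of $(x-a)^{-\beta/2-1}$, which the first term ($\propto(x-a)^{-\beta}$) cannot cancel.
\end{itemize}
So the formula as transcribed is not even integrable for $g\equiv1$, and your final ``substitute back'' check would fail.

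What your method actually proves is \eqref{eq:sol_Carleman} with two changes: $Z(t)$ replaced by $(t-a)Z(t)=(t-a)^{1-\beta/2}(b-t)^{\beta/2}$ in the outer integral, and $Z(s)$ replaced by $(s-a)Z(s)$ in the inner one. The missing idea is this explicit back-conversion, which shifts the weight by a factor $(x-a)$; it is not simply ``the same identity in reverse''.
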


We will now proceed following a similar approach to \cite{Di10}, and extending such analysis to the case of the Riesz-Caputo operator with the boundary conditions specified in \eqref{eq:main_eq}. In order to do that, we will transform the original fractional differential equation into an integral equation, and use the Schauder’s and the Banach's fixed point theorems to show that the solution exists and is unique. 

\begin{lemma} \label{lemma12} 
Let \(1<\alpha<2\), \(a_i,b_i,c_i \in \RR\) with \(a_i+b_i \neq 0\) for \(i=0,1\), and assume \(f:[0,L]\times \RR \rightarrow \RR\) and $f(x,u(x))\in H_{2-\alpha}^*.$
 Then \(u \in C^1[0,L]\) is a solution of \eqref{eq:main_eq} if and only if it satisfies the integral equation
\begin{align}
\label{eq:reform12}
u(x)
&= \frac{A}{\alpha - 1}
\int_0^x f(s,u(s))\,(x - s)^{\alpha - 1}\,ds \notag \\
&+ \frac{B}{\alpha - 1}
\int_0^x h'(\tau)
\left(\frac{L - \tau}{\tau}\right)^{\frac{2 - \alpha}{2}}
(x - \tau)^{\alpha - 1}\,d\tau
+ C_1 x + C_0 
\end{align}
where $A = - \frac{1}{\pi} \tan \left( \frac{\alpha \pi}{2} \right)$, $B=2 \frac{\sin^2 \left(\frac{\alpha \pi}{2} \right)}{\pi^2}$,
\begin{equation}
h(t)= \int_0^t \frac{1}{(t-r)^{\alpha-1}} F(r) dr,
\end{equation}
with $F(r)=\int_r^L \left( \frac{L-s}{s} \right)^{-(2-\alpha)/2} \frac{f(s,u(s))}{(s-r)^{2-\alpha}} ds$.\\
The constants $C_0$ and $C_1$ are determined by the boundary data and are given by:
\begin{equation}
\label{eq:A}
C_1
= \frac{c_1 - b_1\left(A J_1 + B J_2\right)}{a_1 + b_1},
\end{equation}
and
\begin{equation}
	\label{eq:B}
C_0
= \frac{
c_0
- b_0 \left[
\frac{A}{\alpha - 1} I_1
+ \frac{B}{\alpha - 1} I_2
\right]
- b_0 L C_1
}{
a_0 + b_0
}.
\end{equation}
with
\begin{equation}
I_1=\int_0^L f(s,u(s)) (L-s)^{\alpha-1}ds,
\end{equation}
\begin{equation}
I_2=\int_0^L h'(\tau)(L-\tau)^{\alpha/2}(\tau)^{-(2-\alpha)/2} d\tau,
\end{equation}
\begin{equation}
J_1=\int_0^L f(s,u(s)) (L-s)^{\alpha-2}ds,
\end{equation}
\begin{equation}
J_2=\int_0^L h'(\tau)(L-\tau)^{(\alpha-2)/2}(\tau)^{-(2-\alpha)/2} d\tau.
\end{equation}
\end{lemma}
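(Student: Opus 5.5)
We reduce \eqref{eq:main_eq} to a Carleman equation for $v:=u''$, invert it with Lemma~\ref{samko3083}, and then integrate twice. For $1<\alpha<2$ we have $m=2$ and $\tilde m=2-\alpha$, so $(-1)^m=1$. Substituting \eqref{eq:sinistro}--\eqref{eq:destro} into \eqref{eq:RCdef} gives
\begin{equation*}
D^\alpha_{RC}u(x)=\frac{1}{2\Gamma(2-\alpha)}\int_0^L \frac{u''(\xi)}{|x-\xi|^{\alpha-1}}\,d\xi .
\end{equation*}
The equation $D^\alpha_{RC}u=f(\cdot,u)$ is therefore exactly the Carleman equation \eqref{eq:Carleman} on $[a,b]=[0,L]$, with $\beta=2-\alpha\in(0,1)$, unknown $\varphi=u''$ and right-hand side $g(x)=2\Gamma(2-\alpha)f(x,u(x))$. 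The hypothesis $f(\cdot,u(\cdot))\in H^*_{2-\alpha}$ is exactly what Lemma~\ref{samko3083} needs. Note that the identity $D^\alpha_{RC}u=\frac{1}{2}\,I^{(2-\alpha)}$ applied to $u''$ plays the role of Lemma~\ref{lemma1} here.

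For the direct implication, let $u$ solve \eqref{eq:main_eq}. Lemma~\ref{samko3083} then gives $u''$ explicitly as a sum of two terms.

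The first term is $\frac{\sin(\beta\pi)}{\tilde A\pi}\frac{d}{dx}\int_0^x g(t)(x-t)^{-\beta}dt$. Since $\beta=2-\alpha$, we have $\sin(\beta\pi)=\sin(\alpha\pi)$ and $\tilde A=4\cos^2(\alpha\pi/2)$, so
\begin{equation*}
\frac{\sin(\beta\pi)}{\tilde A\pi}=\frac{\tan(\alpha\pi/2)}{2\pi}.
\end{equation*}
Integrating twice (undoing the outer $d/dx$ once, then integrating again) turns $\int_0^x g(t)(x-t)^{1-\alpha}dt$ into $\frac{1}{\alpha-1}\int_0^x g(s)(x-s)^{\alpha-1}ds$ up to affine terms, via Fubini.

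The second term contains $Z(t)=((L-t)/t)^{(2-\alpha)/2}$. The inner double integral coincides with the paper's $h(t)=\int_0^t (t-r)^{1-\alpha}F(r)\,dr$, with $F$ built from $g/Z$. Integrating this term twice in the same way produces $\frac{1}{\alpha-1}\int_0^x h'(\tau)Z(\tau)(x-\tau)^{\alpha-1}d\tau$. Both times the two integrations leave a free affine part $C_1x+C_0$.

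The main obstacle is tracking the constants: absorbing $2\Gamma(2-\alpha)$ and the sign conventions into $A$ and $B$, and in particular checking the stated sign of $A$ and the factor $2$ in $B$ against $(\sin\beta\pi/\pi)^2/\tilde A$. I would do this by a careful termwise computation. I would also check that $u\in C^1$, which follows because $(x-s)^{\alpha-1}$ and $(x-s)^{\alpha-2}$ are integrable.

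Next we determine $C_0$ and $C_1$ from the boundary conditions. Differentiating \eqref{eq:reform12} gives $u'(x)=A\int_0^x f(x-s)^{\alpha-2}ds+B\int_0^x h'Z(x-\tau)^{\alpha-2}d\tau+C_1$, with $u'(0)=C_1$. Evaluating at $L$ gives $u'(L)=AJ_1+BJ_2+C_1$, where $Z(\tau)(L-\tau)^{\alpha-2}$ matches the integrand of $J_2$. Imposing $a_1u'(0)+b_1u'(L)=c_1$ and using $a_1+b_1\ne0$ yields \eqref{eq:A}.

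Similarly $u(0)=C_0$ and $u(L)=\frac{A}{\alpha-1}I_1+\frac{B}{\alpha-1}I_2+C_1L+C_0$, where $Z(\tau)(L-\tau)^{\alpha-1}$ matches the integrand of $I_2$. The condition $a_0u(0)+b_0u(L)=c_0$ with $a_0+b_0\ne0$ then gives \eqref{eq:B}.

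For the converse, let $u\in C^1$ satisfy \eqref{eq:reform12}. The construction shows that $u''$ equals the Carleman solution formula, so $u''$ solves \eqref{eq:Carleman}. Hence $D^\alpha_{RC}u=f(x,u)$, using Lemma~\ref{properties} to see that the affine part $C_1x+C_0$ is annihilated. The boundary conditions hold by the choice of $C_0$ and $C_1$, which reverses the computation above.
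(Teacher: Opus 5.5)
\textbf{Same route as the paper, but the deferred constant check has a real gap.} Like the paper, you rewrite $D^\alpha_{RC}u=f$ as the Carleman equation \eqref{eq:Carleman} for $\varphi=u''$ with $\beta=2-\alpha$. You then invert it with Lemma~\ref{samko3083}, integrate twice by Fubini, and solve the triangular system coming from $u(0)=C_0$, $u'(0)=C_1$ and the values at $L$. Your endpoint computations are fine, including the matching of the integrands of $I_2$ and $J_2$. The gap is the step you postpone, the identification of $A$ and $B$, which is the only part of the lemma that goes beyond ``invert and integrate''.

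\textbf{Errors in the computations you do write.} First, $\sin(\beta\pi)=\sin(2\pi-\alpha\pi)=-\sin(\alpha\pi)$, not $+\sin(\alpha\pi)$. So your first coefficient is really $-\tan(\alpha\pi/2)/(2\pi)$; this identity is exactly what the paper's footnote uses to get the minus sign in $A$. A smaller slip: undoing $d/dx$ leaves the kernel $(x-t)^{-\beta}=(x-t)^{\alpha-2}$, not $(x-t)^{1-\alpha}$.

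Second, your normalization $g=2\Gamma(2-\alpha)f$ is correct. Since $F$ and $h$ in the statement are built from $f$ rather than $g$, Lemma~\ref{samko3083} gives the following. This uses $\sin^2(\beta\pi)/\tilde A=\sin^2(\alpha\pi/2)$ and the minus sign in front of the second term of \eqref{eq:sol_Carleman}:
\begin{equation*}
u''(x)=-\frac{\Gamma(2-\alpha)}{\pi}\tan\Bigl(\frac{\alpha\pi}{2}\Bigr)\varphi_1(x)-\frac{2\,\Gamma(2-\alpha)}{\pi^2}\sin^2\Bigl(\frac{\alpha\pi}{2}\Bigr)\varphi_2(x).
\end{equation*}
Here $\varphi_1(x)=\frac{d}{dx}\int_0^x f(t,u(t))(x-t)^{\alpha-2}\,dt$, $\varphi_2(x)=\frac{d}{dx}\int_0^x Z(t)h'(t)(x-t)^{\alpha-2}\,dt$ and $Z(t)=((L-t)/t)^{(2-\alpha)/2}$. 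These coefficients are $\Gamma(2-\alpha)A$ and $-\Gamma(2-\alpha)B$, with $A,B$ as stated.

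\textbf{Why the stated constants cannot be reached.} The paper gets its $A$ only by setting $g=2f$: its display after ``multiplying by $2\Gamma(2-\alpha)$'' loses the factor $\Gamma(2-\alpha)$ on the right-hand side. Its $B$ also drops the minus sign of the second term of \eqref{eq:sol_Carleman}. The coefficients in \eqref{eq:sol_Carleman} are purely trigonometric in $\beta$, so no rearrangement can make the factor $\Gamma(2-\alpha)=\Gamma(\beta)$ disappear. Hence the ``careful termwise computation'' you defer cannot confirm the lemma as stated.

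Done correctly from Lemma~\ref{samko3083} as given, it yields \eqref{eq:reform12} with the two coefficients above in place of $A$ and $B$. Your boundary computation then gives $C_0,C_1$ by the same formulas \eqref{eq:A}--\eqref{eq:B} with those coefficients inserted. As it stands, your proposal leaves unproved precisely the part that fixes $A$ and $B$, and the one explicit constant you compute has the wrong sign.
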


\begin{proof}
Without loss of generality, throughout the proof we consider a generic interval $I=[a,b]$.
We will proceed by rewriting eq. \eqref{eq:main_eq} in the form \eqref{eq:Carleman} for which we have an explicit solution, and then compute the corresponding constants that satisfy the boundary conditions.\\
Since \(1<\alpha<2\), we have \(m=2\) and \(\beta=2-\alpha\).  \\
The Carleman equation \eqref{eq:Carleman} can be written as
\begin{equation}
\label{eq:rewrite_carleman}
\Gamma(\beta)\, I^{(\beta)}_{a,b}\varphi(x) = g(x),
\qquad \beta\in(0,1),
\end{equation}

The Caputo fractional derivatives can also be written as  \cite{KST06}:
\[
D^\alpha_{a,x}u(x)
=\frac{1}{\Gamma(2-\alpha)}\int_a^x
\frac{u''(t)}{(x-t)^{\alpha-1}}\,dt
= \bigl(I^{(2-\alpha)}_{a,x}D^2u\bigr)(x),
\]
\[
D^\alpha_{x,b}u(x)
=\frac{1}{\Gamma(2-\alpha)}\int_x^b
\frac{u''(t)}{(t-x)^{\alpha-1}}\,dt
= \bigl(I^{(2-\alpha)}_{x,b}D^2u\bigr)(x).
\]

The Rietz--Caputo derivative is therefore
\[
D^\alpha_{\mathrm{RC}}u(x)
=\frac12\Bigl(D^\alpha_{a,x}u(x)+D^\alpha_{x,b}u(x)\Bigr)
=\frac12\bigl(I^{(2-\alpha)}_{a,b}D^2u\bigr)(x).
\]

Multiplying both sides of \eqref{eq:main_eq} by $2\Gamma(2-\alpha)$ yields
\[
\Gamma(2-\alpha)\, I^{(2-\alpha)}_{a,b}D^2u(x)
=2f(x,u(x)).
\]

Now if we set
\[
\varphi(t)=u''(t), \qquad g(x)=2f(x,u(x)),
\]
we recognize the Carleman equation in the form \eqref{eq:rewrite_carleman} and we can write the unique solution by making use of Lemma \ref{samko3083}.  For semplicity, we split the solution in two parts\footnote{In this step we have made use of the trigonometric identities:
\begin{equation*}
\sin((2-\alpha)\pi)=-\sin(\alpha \pi), \ \
\sin(\alpha \pi)=2\sin(\frac{\alpha \pi}{2})\cos(\frac{\alpha \pi}{2}), \ \ \cos((2-\alpha)\pi/2)=-\cos(\frac{\alpha \pi}{2}).
\end{equation*}}:
\[
\varphi(x)=A\,\varphi_1(x)+B\,\varphi_2(x),
\]
where
\[
\varphi_1(x)
=\frac{d}{dx}\int_a^x \frac{f(t,u(t))}{(x-t)^{2-\alpha}}\,dt,
\]
\[
\varphi_2(x)
=\frac{d}{dx}\int_a^x
\left(\frac{b-t}{t-a}\right)^{\frac{2-\alpha}{2}}
\frac{h'(t)}{(x-t)^{2-\alpha}}\,dt,
\]
with
\[
A=-\frac{1}{\pi}\tan\!\left(\frac{\alpha\pi}{2}\right),
\qquad
B=2\frac{\sin^2(\alpha\pi/2)}{\pi^2}.
\]
and
\[
h(t)=\int_a^t \frac{1}{(t-r)^{\alpha-1}}\,F(r)\,dr,
\qquad
F(r)=\int_r^b
\left(\frac{b-s}{s-a}\right)^{-\frac{2-\alpha}{2}}
\frac{f(s,u(s))}{(s-r)^{2-\alpha}}\,ds.
\]

Now, since $\varphi(x)=u''(x)$, the solution of the fractional differential problem is obtained by integrating twice.
\subsubsection*{First integration\footnote{Again we split the integration in two parts: 
\[
I_1(x)
= \int_a^x \varphi_1(t)\,dt
= \left[ \int_a^t \frac{f(s,u(s))}{(t-s)^{2-\alpha}}\,ds \right]_{t=a}^{t=x}
= \int_a^x \frac{f(s,u(s))}{(x-s)^{2-\alpha}}\,ds .
\]
and
\[
\int_a^x \varphi_2(t)\,dt
=
\left[
\int_a^t
\frac{
\left(\dfrac{b-\tau}{\tau-a}\right)^{\frac{2-\alpha}{2}}
}{
(t-\tau)^{2-\alpha}
}
\,h'(\tau)\,d\tau
\right]_{t=a}^{t=x}
=
\int_a^x
\frac{
\left(\dfrac{b-\tau}{\tau-a}\right)^{\frac{2-\alpha}{2}}
}{
(x-\tau)^{2-\alpha}
}
\,h'(\tau)\,d\tau .
\]
}:}
\begin{align}
u'(x)&=\int_a^x \varphi(t)\,dt + C_1 \nonumber \\
&= A \int_a^x \frac{f(s,u(s))}{(x-s)^{2-\alpha}}\,ds
+
B \int_a^x
\frac{
\left(\dfrac{b-\tau}{\tau-a}\right)^{\frac{2-\alpha}{2}}
}{
(x-\tau)^{2-\alpha}
}
\,h'(\tau)\,d\tau
+ C_1,
\end{align}
with $C_1=u'(a)$.

\subsubsection*{Second integration\footnote{We start by computing the first double integral:
\[
\tilde{I}_1
=
A \int_a^x
\left[
\int_a^t
\frac{f(s,u(s))}{(t-s)^{2-\alpha}}
\,ds
\right]
dt .
\]
Now if change the integration order we get:
\begin{align*}
\tilde{I}_1 &= A \int_{s=a}^{x} f(s,u(s)) \left[ \int_{t=s}^{x} (t-s)^{\alpha-2} \, dt \right] ds
=\frac{A}{\alpha - 1} \int_{a}^{x} f(s,u(s)) \, (x - s)^{\alpha - 1} \, ds.
\end{align*}
Similarly for $\tilde{I}_2$ we get:
\[\tilde{I}_2 = \frac{B}{\alpha - 1} \int_a^x h'(\tau) 
\left[ \frac{b - \tau}{\tau - a} \right]^{\frac{2 - \alpha}{2}} 
(x - \tau)^{\alpha - 1} \, d\tau. \]
Finally we get the linear term by integrating the constant $C_1$: $\int_a^x C_1 dt=C_1(x-a)$.
}:}
\begin{align}
u(x)&=\int_a^x u'(t)\,dt + C_0 \nonumber \\
&=\frac{A}{\alpha-1}\int_a^x f(s,u(s))(x-s)^{\alpha-1}\,ds \nonumber \\
&+\frac{B}{\alpha-1}\int_a^x
h'(s)\left(\frac{b-s}{s-a}\right)^{\frac{2-\alpha}{2}}
(x-s)^{\alpha-1}\,ds
+C_1(x-a)+C_0,
\end{align}
with $C_0=u(a)$.
\subsubsection*{Boundary conditions:}
The boundary conditions given by eq. \eqref{eq:main_eq} are:
\[
a_0u(a)+b_0u(b)=c_0,
\qquad
a_1u'(a)+b_1u'(b)=c_1.
\]
where, by hypothesis $a_0+b_0 \neq 0$ and $a_1+b_1 \neq 0$.\\
Evaluating the solution and its derivative at $x=a$ and $x=b$, one obtains a linear system for $C_0$ and $C_1$ that gives:
\[
C_1=\frac{c_1-b_1(AJ_1+BJ_2)}{a_1+b_1},
\]
\[
C_0=\frac{c_0-b_0\bigl[\frac{A}{\alpha-1}I_1+\frac{B}{\alpha-1}I_2\bigr]
-b_0(b-a)C_1}{a_0+b_0},
\]
where
\[
I_1=\int_a^b f(s,u(s))(b-s)^{\alpha-1}\,ds,
\quad
I_2=\int_a^b h'(s)(b-s)^{\alpha/2}(s-a)^{-(2-\alpha)/2}\,ds,
\]
\[
J_1=\int_a^b \frac{f(s,u(s))}{(b-s)^{2-\alpha}}\,ds,
\quad
J_2=\int_a^b
h'(s)(b-s)^{(\alpha-2)/2}(s-a)^{-(2-\alpha)/2}\,ds.
\]
By choosing $a=0$ and $b=L$, all the claims follow.
\end{proof}

 \begin{theorem}
\label{teounique}
Assume the hypotheses of Lemma~\ref{lemma12} and suppose that
$f:[0,L]\times\RR\to\RR$ is Lipschitz continuous with respect to the second
variable, i.e.
\[
|f(x,y_1)-f(x,y_2)|\le L_f |y_1-y_2|,
\qquad \forall x\in[0,L],\; y_1,y_2\in\RR.
\]
If
\[
L_f < \frac{1}{K},
\]
where
\begin{align*}
K &:=
\frac{1}{\alpha-1}
\Bigl(
|A|L^{\alpha-1}
+
|B|L^{\alpha-1}
\Bigr)\\
&+
\frac{|b_1|}{|a_1+b_1|}
\Bigl(
|A|J_1^\ast+|B|J_2^\ast
\Bigr)\\
&+
\frac{|b_0|}{|a_0+b_0|}
\Bigl(
|A|I_1^\ast+|B|I_2^\ast+L
\Bigr),
\end{align*}
and $I_i^\ast,J_i^\ast$ are suitable positive constants defined below, then
problem~\eqref{eq:main_eq} admits a unique solution
$u\in C^1[0,L]$.
\end{theorem}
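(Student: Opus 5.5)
The plan is a Banach contraction argument built on the integral reformulation of Lemma~\ref{lemma12}. On $X=C[0,L]$ with the sup norm, define $T:X\to X$ by the right-hand side of \eqref{eq:reform12}:
\[
(Tu)(x)=\frac{A}{\alpha-1}\int_0^x f(s,u(s))(x-s)^{\alpha-1}\,ds+\frac{B}{\alpha-1}\int_0^x h_u'(\tau)\Bigl(\frac{L-\tau}{\tau}\Bigr)^{\frac{2-\alpha}{2}}(x-\tau)^{\alpha-1}\,d\tau+C_1[u]\,x+C_0[u].
\]
Here $h_u$, $C_1[u]$ and $C_0[u]$ are the quantities of Lemma~\ref{lemma12} built from $f(\cdot,u(\cdot))$. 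By Lemma~\ref{lemma12}, fixed points of $T$ lying in $C^1[0,L]$ are exactly the solutions of \eqref{eq:main_eq}. Two facts give this regularity. First, the kernels $(x-s)^{\alpha-1}$ are continuous, so $T$ maps into $C[0,L]$. Second, differentiating once produces the weakly singular kernels $(x-s)^{\alpha-2}$, which are integrable since $\alpha>1$. Hence the range of $T$ lies in $C^1[0,L]$, and a fixed point in $C[0,L]$ is automatically a $C^1$ solution.

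The central step is the estimate $\|Tu-Tv\|_\infty\le K L_f\|u-v\|_\infty$, obtained term by term.
\begin{itemize}
\item \emph{First integral.} Using $|f(s,u)-f(s,v)|\le L_f\|u-v\|_\infty$ and $\int_0^x(x-s)^{\alpha-1}ds\le L^\alpha/\alpha$, the difference is bounded by $L_f\|u-v\|_\infty$ times a constant of the stated form $|A|L^{\alpha-1}/(\alpha-1)$, up to normalization of $L$.
\item \emph{Second integral.} The map $u\mapsto h_u'$ is linear in $f(\cdot,u)$, being a composition of a weighted right Riemann--Liouville integral with a left Riemann--Liouville-type integral and a derivative. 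I will bound $\bigl\|\bigl(\tfrac{L-\tau}{\tau}\bigr)^{\frac{2-\alpha}{2}}(h_u'-h_v')\bigr\|_{L^1}$ by a constant times $L_f\|u-v\|_\infty$, absorbing the constant into $|B|L^{\alpha-1}/(\alpha-1)$.
\item \emph{Boundary constants.} From \eqref{eq:A}, $|C_1[u]-C_1[v]|\le \frac{|b_1|}{|a_1+b_1|}(|A|J_1^\ast+|B|J_2^\ast)L_f\|u-v\|_\infty$. Here $J_1^\ast=\int_0^L(L-s)^{\alpha-2}ds=L^{\alpha-1}/(\alpha-1)$, and $J_2^\ast$ is the operator norm of the map $f\mapsto J_2$. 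Similarly, \eqref{eq:B} gives the $C_0$ contribution with $I_1^\ast=L^\alpha/\alpha$ and $I_2^\ast$ defined analogously. The term $b_0LC_1$ supplies the extra $L$ inside the last bracket of $K$; strictly it multiplies the $C_1$ bound, which I will fold into the definition of the constants.
\end{itemize}
Summing these, and using $|x|\le L$ for the linear term, gives contraction constant $KL_f<1$. Banach's fixed point theorem then yields a unique fixed point, hence a unique $C^1$ solution.

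The main obstacle is the $h'$ terms, namely justifying that $J_2^\ast$ and $I_2^\ast$ are finite. This requires showing that $h_u$ is differentiable with $h_u'$ integrable against the weights $\tau^{-(2-\alpha)/2}(L-\tau)^{(\alpha-2)/2}$ and $\tau^{-(2-\alpha)/2}(L-\tau)^{\alpha/2}$, with a bound linear in $\|f(\cdot,u)-f(\cdot,v)\|_\infty$.

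I would handle this in three steps. First, write $F$ as a right fractional integral of order $\alpha-1$ of the weighted function $(s/(L-s))^{(2-\alpha)/2}f(s,u(s))$. The weight blows up at $s=L$ only like $(L-s)^{-(2-\alpha)/2}$, which is integrable, so $F$ is bounded up to a mild endpoint singularity. Second, $h=\Gamma(2-\alpha)I_{0,x}^{(2-\alpha)}F$, and I would use the composition and semigroup structure to show that $h'$ is a weakly singular integral of $f(\cdot,u)$ whose kernel has integrable singularities of order less than $1$. The hypothesis $f(\cdot,u)\in H_{2-\alpha}^\ast$ from Lemma~\ref{lemma12} is what guarantees that this derivative exists. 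Third, since the resulting bound on $h'$ is linear in $f$, it inherits the Lipschitz factor $L_f$, and the remaining weighted integrals are Beta-function integrals. These are exactly the constants $I_2^\ast$ and $J_2^\ast$, finite because every exponent is greater than $-1$.

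If $h'$ proves too singular near $\tau=0$, the fallback is to integrate by parts in the second integral to move the derivative onto the explicit kernel. That produces singularities of order $\tau^{-(2-\alpha)/2-1}$, which must be checked to cancel against $h(0)=0$.
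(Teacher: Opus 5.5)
Your architecture is the paper's: Banach's theorem for the operator of Lemma~\ref{lemma12}, with $Tu-Tv$ split into the $f$-integral, the $h'$-integral and the boundary constants. Two of your choices improve on the paper. Working in $C[0,L]$ avoids the paper's $C^1[0,L]$ with the sup norm, which is not complete. Aiming at a weighted $L^1$ bound for $\Phi_u=h_u'\,((L-\tau)/\tau)^{(2-\alpha)/2}$ avoids the paper's claim that $g\mapsto\Phi$ is bounded on $L^\infty$. That claim is false: near $0$, $\Phi_u(\tau)\approx F_u(0)L^{(2-\alpha)/2}\tau^{-\alpha/2}$, where $F_u(0)=\int_0^L[s(L-s)]^{-(2-\alpha)/2}g_u(s)\,ds$ and $g_u=f(\cdot,u)$.

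\textbf{The key step fails as planned.} The gap is in the step you single out as the main obstacle. With $w(s)=(s/(L-s))^{(2-\alpha)/2}$ one has $h=\Gamma(2-\alpha)\Gamma(\alpha-1)\,I^{(2-\alpha)}_{0,x}I^{(\alpha-1)}_{x,L}[wg]$. This is a \emph{left} integral composed with a \emph{right} one, so no semigroup law applies. The orders add up to exactly $1$, so $h'$ is an order-zero operator applied to $wg$. Concretely, exchanging integrals gives $h(t)=\int_0^L k(t,s)w(s)g(s)\,ds$ with $k(t,s)=\int_0^{\min(t,s)}(t-r)^{1-\alpha}(s-r)^{\alpha-2}\,dr$. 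This kernel has a logarithmic diagonal singularity $k(t,s)\approx\log\frac{1}{|t-s|}$, hence $\partial_t k(t,s)\approx\frac{1}{s-t}$. So $h'$ contains a Cauchy principal-value integral $\mathrm{p.v.}\int_0^L\frac{w(s)g(s)}{s-t}\,ds$. It is not a weakly singular integral, and no Beta-function computation bounds it. For merely continuous $g_u$ it exists only almost everywhere and can be unbounded. You also cannot invoke $f(\cdot,u)\in H^\ast_{2-\alpha}$ for every $u\in C[0,L]$, since that hypothesis concerns the solution, not arbitrary elements of the space.

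Two routes do work. One is the weighted $L^p$-boundedness ($1<p<\infty$) of the finite Hilbert transform, followed by H\"older's inequality against the weights. The other is your fallback: integrate by parts so that only $h$ appears. This is elementary because $|h(\tau)|\le C\|g\|_\infty\min\{1,\tau^{2-\alpha}\}$. It disposes of the $\Phi$-term and of $I_2$, where $\tau=0$ is harmless. For $J_2$, however, the trouble is at $\tau=L$: the boundary term $h(L)(L-\tau)^{(\alpha-2)/2}$ diverges. Near $L$ you must subtract $h(L)$ and prove $|h(L)-h(\tau)|\le C\|g\|_\infty(L-\tau)^{\theta}$ for some $\theta>(2-\alpha)/2$.

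\textbf{The $C^1$ regularity of the range is false.} Integrability of $(x-\tau)^{\alpha-2}$ gives continuity only against a bounded density, and $\Phi_u$ is unbounded. In fact $h_u(\tau)=F_u(0)\tau^{2-\alpha}/(2-\alpha)+o(\tau^{2-\alpha})$. So the $B$-term of $Tu$ behaves like $c\,F_u(0)\,x^{\alpha/2}$ near $x=0$ with $c\neq0$. Hence $Tu$ is not differentiable at $0$ whenever $F_u(0)\neq0$. Take $f\equiv1$, which satisfies the hypotheses on $f$ with $L_f=0$. Then $T$ is constant, and its unique fixed point $T0$ is not $C^1$. So the inference \emph{a fixed point in $C[0,L]$ is automatically a $C^1$ solution} cannot be made for the operator as written. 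This is a defect of the representation in Lemma~\ref{lemma12} itself. The paper's proof never meets it because it simply assumes $T$ acts on $C^1[0,L]$.

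Finally, constants cannot be absorbed into the explicit numbers $|A|,|B|$; the first term genuinely gives $|A|L^{\alpha}/(\alpha(\alpha-1))$. What a contraction argument yields is a constant of the stated shape but not the stated $K$, the same looseness as in the paper.
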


\begin{proof}
By Lemma~\ref{lemma12}, $u\in C^1[0,L]$ is a solution of
problem~\eqref{eq:main_eq} if and only if it satisfies the integral equation
\[
u(x) = (\mathcal{T}u)(x),
\]
where
\begin{align}
(\mathcal{T}u)(x)
&=
\frac{A}{\alpha - 1}
\int_0^x g_u(s)\,(x - s)^{\alpha - 1}\,ds \notag \\
&\quad +
\frac{B}{\alpha - 1}
\int_0^x \Phi_u(\tau)\,(x - \tau)^{\alpha - 1}\,d\tau
+ C_1(u) x+C_0(u),
\end{align}
and
\[
\Phi_u(\tau)
=
h_u'(\tau)
\left(\frac{L - \tau}{\tau}\right)^{\frac{2 - \alpha}{2}},
\qquad
g_u(x)=f(x,u(x)).
\]

The constants $C_0(u)$ and $C_1(u)$ depend linearly on $g_u$ and are given by
\[
C_1(u)
=
\frac{c_1 - b_1\left(A J_1(g_u) + B J_2(g_u)\right)}{a_1 + b_1},
\]
\[
C_0(u)
=
\frac{
c_0
- b_0 \left[
\frac{A}{\alpha - 1} I_1(g_u)
+ \frac{B}{\alpha - 1} I_2(g_u)
\right]
- b_0 L C_1(u)
}{
a_0 + b_0
}.
\]

Let $u,v\in C^1[0,L]$. Then
\[
(\mathcal{T}u)(x)-(\mathcal{T}v)(x)
=
T_1(x)+T_2(x)+T_3(x),
\]
where
\begin{align*}
T_1(x)
&=
\frac{A}{\alpha-1}
\int_0^x
\bigl[g_u(s)-g_v(s)\bigr](x-s)^{\alpha-1}\,ds, \\
T_2(x)
&=
\frac{B}{\alpha-1}
\int_0^x
\bigl[\Phi_u(\tau)-\Phi_v(\tau)\bigr](x-\tau)^{\alpha-1}\,d\tau, \\
T_3(x)
&=
\bigl(C_1(u)-C_1(v)\bigr)x
+
\bigl(C_0(u)-C_0(v)\bigr).
\end{align*}

\subsubsection*{Estimate of $T_1$:}
By the Lipschitz continuity of $f$,
\[
|g_u(s)-g_v(s)|
\le L_f |u(s)-v(s)|
\le L_f \|u-v\|_\infty.
\]
Hence
\[
|T_1(x)|
\le
\frac{|A|}{\alpha-1}
L_f \|u-v\|_\infty
\int_0^x (x-s)^{\alpha-1}\,ds
\le
\frac{|A|}{\alpha-1}
L_f \|u-v\|_\infty \frac{L^{\alpha}}{\alpha}.
\]

\subsubsection*{Estimate of $T_2$:}

Since the mapping $g\mapsto \Phi$ defines a linear and bounded operator
from $L^\infty(a,b)$ into itself, there exists a constant $C>0$ such that
\[
\|\Phi_u-\Phi_v\|_\infty
\le
C\,\|g_u-g_v\|_\infty.
\]
Thus
\[
|T_2(x)|
\le
\frac{|B|}{\alpha-1}
C\,L_f \|u-v\|_\infty
\int_0^x (x-\tau)^{\alpha-1}\,d\tau
\le
\frac{|B|}{\alpha-1}
L_f \|u-v\|_\infty \frac{L^{\alpha}}{\alpha},
\]
where the constant $C$ has been absorbed into $|B|$.

\subsubsection*{Estimate of $T_3$:}
Using the linear dependence of $C_1$ and $C_0$ on $g_u$, we obtain
\[
|C_1(u)-C_1(v)|
\le
\frac{|b_1|}{|a_1+b_1|}
\Bigl(
|A|\,|J_1(g_u-g_v)|
+
|B|\,|J_2(g_u-g_v)|
\Bigr),
\]
and similarly
\[
|C_0(u)-C_0(v)|
\le
\frac{|b_0|}{|a_0+b_0|}
\Bigl(
|A|\,|I_1(g_u-g_v)|
+
|B|\,|I_2(g_u-g_v)|
+
L\,|C_1(u)-C_1(v)|
\Bigr).
\]

By boundedness of the integral operators $I_i$ and $J_i$ in $L^\infty(0,L)$,
there exist constants $I_i^\ast,J_i^\ast>0$ such that
\[
|I_i(g_u-g_v)| \le I_i^\ast L_f\|u-v\|_\infty,
\qquad
|J_i(g_u-g_v)| \le J_i^\ast L_f\|u-v\|_\infty.
\]

Therefore
\[
|T_3(x)|
\le
L_f K_3 \|u-v\|_\infty,
\]
for a suitable constant $K_3>0$.

Collecting the estimates for $T_1$, $T_2$ and $T_3$, we obtain
\[
\|\mathcal{T}u-\mathcal{T}v\|_\infty
\le
L_f K \|u-v\|_\infty,
\]
where $K$ is defined in the statement of the theorem.
If $L_f<1/K$, then $\mathcal{T}$ is a contraction on $C^1[0,L]$.
Hence, by the Banach fixed point theorem, $\mathcal{T}$ admits a unique fixed
point, which is the unique solution of problem~\eqref{eq:main_eq}.
\end{proof}

\section{Numerical method}
\label{sec3}

\subsection{Optimal B-spline basis functions}
\label{subsec31}
A spline is a piecewise polynomial function widely used in computational mathematics for tasks such as curve fitting, data smoothing, and numerical approximation of integral and differential equations.  On the real line splines can be written as linear combination of the translate of cardinal B-splines, which are compactly supported piecewise polynomials having breakpoints at the integers. The cardinal B-spline $N_n$ of integer degree $n>0$ can be defined through the truncated power function $x^n_+ := \max(0,x)^n$ as 
\begin{equation*}
	N_{n}(x) = (n+1) [0,1,\ldots,n+1] (y-x)^n_+,
\end{equation*}
where  $[x_0,x_1,\ldots,x_{n+1}] f$ denotes the~divided difference of the function $f$ on the~sequence of knots $\{x_0,x_1,\ldots,x_{n+1}\}$.  This definition works also for the case $n=0$, but we will consider only the case of (at least) continuous polynomials.\\
On a~finite interval $[0,L]$, $L \in \mathbb{N}$, splines can be represented as linear combination of the~optimal basis,  which have integer knots of multiplicity one inside the interval and knots of multiplicity $n+1$ at the~endpoints. Optimality refers to a set of properties, such as compact support, high smoothness, endpoint conditions, partition of unity and numerical stability, that make these bases ideal for  approximating functions. For more details on splines see the basic monographs \cite{dB78,Sc07}.

The~optimal basis ${\cal N}_n = \left\{N_{j,n}(x), 0\le j \le L+n-1 \right\}$ on the~interval $[0,L]$ has $L+n$ basis functions. The $2n$ functions $N_{j,n}$ and~$N_{L+n-1-j,n}$, $0 \le j \le n-1$, are respectively the left and~right boundary functions. The $L-n$ functions $N_{j,n}$, $n \le j \le L-1$, are the internal functions (see  \cite{Pi18-b} for more details).

We~observe that the~optimal basis is centrally symmetric and fulfills the endpoint conditions
\begin{equation*}
	\label{eq:endpoint}
	N_{j,n}(0) = \delta_{j0}\,, \quad N_{L+n-1-j,n}(L) = \delta_{j0}\,, \qquad 0 \le j \le L+n-1,
\end{equation*}
where $\delta_{j0}$ denotes the~Kronecker symbol.
The~optimal basis ${\cal N}_n$ can be refined by dilation \cite{dB78,Sc07}. Let $h$ be the~refinement step, that is, the~distance between the~refined knot sequence:
\begin{equation*}
\begin{cases}
	x_{0,h} = x_{1,h}=\cdots =x_{n-1,h} = 0; \\ 
	x_{j,h} = (j-n+1)h\,, \text{ for } n\le j \le M_h; \\
	x_{M_h+1,h}=x_{M_h+2,h}=\cdots=x_{M_h+n-1,h}=L\,,
	\end{cases}
\end{equation*}
where $M_h = L/h+n$. The~refined basis ${\cal N}_{h,n} = \{N_{j,h,n}(x), \ 0 \le j \le M_h\}$, $x \in [0,L]$, has $L/h-n$ internal functions and $2n$ boundary functions. We notice that refining the knots only leads to an increase in the number of internal functions, while the number of boundary functions remains unchanged.

We will use these basis functions to represent the approximate solution of the fractional differential problem \eqref{eq:main_eq}.  Another great advantage of using such a representation is that fractional derivatives of the optimal B-spline basis functions can be computed analytically using the explicit formula provided in \cite{Pi18-b,Pit:22} and by leveraging the symmetry properties of the Riesz-Caputo operator and the basis functions together. Specifically,
 \begin{proposition}[\cite{Pit:22}]   
	\label{th_symm}
	Let $\Phi = \{\phi_j(x),0\le j \le M\}$ be a centrally symmetric basis function in the interval $[0,L]$, i.e.,
		$\phi_j(x) = \phi_{M-j}(L-x),$
	for $0 \le j \le M$ and  $x \in [0,L]$.
	Then, the left and right Caputo derivatives of the basis function $\Phi$ satisfy the following symmetry properties:
	\begin{equation*} \label{eq:symm_derphi}
	D^\alpha_{x,L}\phi_{j}(x)=D^\alpha_{0,L-x} \phi_{j}(L-x), \qquad 0 \le j \le M.
	\end{equation*}
 \end{proposition}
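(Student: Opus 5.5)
The plan is a direct change of variables in the integral defining the right Caputo derivative \eqref{eq:destro}, followed by the chain rule for the symmetric basis functions.

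\emph{Step 1.} Fix $j$ and set $\psi(y):=\phi_j(L-y)$ for $y\in[0,L]$. By central symmetry, $\psi(y)=\phi_{M-j}(y)$. Since $1<\alpha<2$, we have $m=2$ and $\tilde m=2-\alpha$. The chain rule then gives
\[
\psi''(y)=(-1)^2\phi_j''(L-y)=\phi_j''(L-y).
\]
In general $\psi^{(m)}(y)=(-1)^m\phi_j^{(m)}(L-y)$, and this factor $(-1)^m$ is exactly the one built into \eqref{eq:destro}.

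\emph{Step 2.} Write
\[
D^\alpha_{x,L}\phi_j(x)=\frac{(-1)^m}{\Gamma(\tilde m)}\int_x^L \frac{\phi_j^{(m)}(\xi)}{(\xi-x)^{1-\tilde m}}\,d\xi .
\]
Substitute $\xi=L-\eta$, so that $\eta$ runs from $L-x$ down to $0$ and $d\xi=-d\eta$. Also $\xi-x=(L-x)-\eta$. Reversing the limits gives
\[
D^\alpha_{x,L}\phi_j(x)=\frac{1}{\Gamma(\tilde m)}\int_0^{L-x}\frac{(-1)^m\phi_j^{(m)}(L-\eta)}{\bigl((L-x)-\eta\bigr)^{1-\tilde m}}\,d\eta .
\]

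\emph{Step 3.} By Step 1 the numerator equals $\psi^{(m)}(\eta)$. Comparing with \eqref{eq:sinistro} evaluated at the point $L-x$, the right-hand side is $D^\alpha_{0,y}\psi(y)$ at $y=L-x$. This is what the notation $D^\alpha_{0,L-x}\phi_j(L-x)$ means: the left Caputo derivative of the reflected function, taken at $L-x$. Equivalently, it is $D^\alpha_{0,x}\phi_{M-j}$ evaluated at $L-x$.

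\emph{Main obstacle: regularity.} B-splines of degree $n\ge2$ are only piecewise smooth. For $n\ge 2$, $\phi_j\in C^1$ and $\phi_j''$ is piecewise continuous and bounded, so the integrals exist as Lebesgue integrals and the substitution is legitimate. The chain rule for $\psi''$ holds almost everywhere, away from the knots, which is enough because the knots form a null set.

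The second point needing care is interpreting the notation $D^\alpha_{0,L-x}$ correctly, as explained in Step 3. No earlier lemma is needed beyond the definitions \eqref{eq:sinistro}--\eqref{eq:destro}.
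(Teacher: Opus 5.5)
Your proof is correct, and it is the standard argument for this symmetry property: substitute $\xi=L-\eta$ in \eqref{eq:destro} and use $\phi_j^{(m)}(L-\eta)=(-1)^m\phi_{M-j}^{(m)}(\eta)$, whose sign cancels the built-in $(-1)^m$. The paper itself gives no proof and simply cites \cite{Pit:22}. You were also right to read the right-hand side as $\bigl(D^\alpha_{0,y}\phi_{M-j}\bigr)(L-x)$: the literal reading $\bigl(D^\alpha_{0,y}\phi_{j}\bigr)(L-x)$ is false whenever $\phi_j-\phi_{M-j}$ is not affine.
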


\subsection{Collocation method and Greville Abscissae}
\label{subsec32}
In \cite{Sor15} we discussed the advantages of using Greville Abscissae as collocation points, and compared this choice with other possible distributions of points.  Motivated by the effectiveness of choosing Greville Abscissae, also shown by numerical experiments, in this paper we will adopt the same option.  Greville Abscissae are defined as \cite{dB78,Sc07,Sb05} :
\begin{equation}
\label{eq:Grevpoint}
\eta_{p,h,n}=\frac{x_{p,h}+...+x_{p+n-1,h}}{n}, \qquad 0\le p \le M_h.
\end{equation}
We solve the fractional boundary value problem \eqref{eq:main_eq} by the spline collocation method introduced in \cite{PPP:20}. First, we approximate the solution to the differential problem by a spline function:
\begin{equation*} \label{eq:spline_approx}
	u(x) \approx u_{h,n}(x) =  \sum_{k=0}^{M_h} \ell_{k,h,n} N_{k,h,n}(x), \qquad x \in [0,L]\,.
\end{equation*}
Then, by collocating the differential problem \eqref{eq:main_eq} at the Greville Abscissae \eqref{eq:Grevpoint}, we obtain the square nonlinear system
\begin{equation} 
\label{eq:coll_eq}
\left \{ \begin{array}{ll}
\displaystyle \sum_{k=0}^{M_h} \ell_{k,h,n} D_{RC}^\alpha N_{k,h,n}(\eta_{p,h,n})=f\bigl(\eta_{p,h,n}, \sum_{k=0}^{M_h} \ell_{k,h,n} N_{k,h,n}(\eta_{p,h,n})\bigr), \ \ \ 0<p<M_h, \\ \\
\displaystyle a_i \sum_{k=0}^{M_h} \ell_{k,h,n} N_{k,h,n}^{(i)}(\eta_{0,h,n})+b_i\sum_{k=0}^{M_h} \ell_{k,h,n} N_{k,h,n}^{(i)}(\eta_{M_h,h,n})=c_i, \ \ i=0,..,{\lfloor \alpha \rfloor}, 
\end{array}
\right.
	\end{equation}
where the coefficients $\{\ell_{k,h,n}\}$ are the $M_h+1$ unknowns. 

\subsection{Convergence analysis}
\label{subsec33}

In this section we aim to prove the convergence for the proposed method.
\\
\begin{theorem}
\label{thm:conv_full}
Assume the hypotheses of Theorem \ref{teounique}, i.e., that $f:[0,L]\times \mathbb{R}\to \mathbb{R}$ is continuous and Lipschitz in the second variable with constant $L_f$ sufficiently small.  
Let $u(x)$ be the exact solution of problem \eqref{eq:main_eq} and $u_{h}(x)$ its spline collocation approximation defined by \eqref{eq:coll_eq}. Then
\begin{equation}
\label{eq:conv_norm}
\lim_{h\to 0} \| u - u_h \|_\infty = 0.
\end{equation}
\end{theorem}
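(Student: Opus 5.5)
The plan is to follow the Vainikko/Pedas--Tamme framework and work with the second derivative $z=u''$ rather than with $u$. Write the problem as an operator equation for $z$. As in the proof of Lemma~\ref{lemma12}, $D^\alpha_{RC}u=\frac12 I^{(2-\alpha)}_{0,L}u''$ (the two-sided Carleman operator). Any $u$ satisfying the two boundary conditions can be reconstructed from $z=u''$ by
\[
u=\mathcal{V}z:=\int_0^x (x-s)z(s)\,ds+C_1(z)x+C_0(z),
\]
where $C_0,C_1$ are the linear functionals of $z$ fixed by $a_iu^{(i)}(0)+b_iu^{(i)}(L)=c_i$; these are well defined since $a_i+b_i\neq0$. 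The problem \eqref{eq:main_eq} then becomes
\[
\mathcal{R}z=\mathcal{F}(\mathcal{V}z),\qquad \mathcal{R}:=\tfrac12 I^{(2-\alpha)}_{0,L},\quad \mathcal{F}(v)(x)=f(x,v(x)).
\]
By Lemma~\ref{samko3083}, $\mathcal{R}$ is invertible on the relevant weighted space, so this is equivalent to the fixed-point equation
\[
z=\mathcal{R}^{-1}\mathcal{F}(\mathcal{V}z)=:\mathcal{G}(z).
\]
Here $\mathcal{V}$ is compact from $L^\infty$ into $C[0,L]$. The small-Lipschitz hypothesis of Theorem~\ref{teounique} makes $\mathcal{G}$ a contraction, exactly as in that proof. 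In particular $I-\mathcal{G}'$ is boundedly invertible near the solution, so the solution $z^\ast$ is isolated.

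Next, write the collocation scheme \eqref{eq:coll_eq} in the same form. Since $u_h\in\mathcal{S}_{h,n}$, its second derivative $z_h=u_h''$ lies in the spline space $\mathcal{S}_{h,n-2}$. The two boundary equations in \eqref{eq:coll_eq} are imposed exactly, so $u_h=\mathcal{V}z_h$. Let $P_h$ be the interpolation projector onto the image space $\mathcal{R}(\mathcal{S}_{h,n-2})$ at the interior Greville abscissae $\eta_{p,h,n}$, $0<p<M_h$. The scheme then reads
\[
P_h\mathcal{R}z_h=P_h\mathcal{F}(\mathcal{V}z_h).
\]
Equivalently, with the projection $Q_h:=\mathcal{R}^{-1}P_h\mathcal{R}$ onto the spline space, it reads $z_h=Q_h\mathcal{G}(z_h)$. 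For $u$ itself the relevant projector is $\Pi_h:=\mathcal{V}Q_h\mathcal{V}^{-1}$ on the space satisfying the boundary conditions, and
\[
u_h=\Pi_h\,\mathcal{V}\mathcal{R}^{-1}\mathcal{F}(u_h).
\]

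The convergence argument then has three steps.
\begin{enumerate}
\item Show $\|\Pi_h w - w\|_\infty\to0$ for every $w$ in the solution space. This follows from the approximation power of the refined optimal B-spline basis, together with density of smooth functions, once the projectors are known to be uniformly bounded.
\item Combine pointwise convergence with compactness of $\mathcal{K}:=\mathcal{V}\mathcal{R}^{-1}\mathcal{F}$, using the standard lemma that pointwise convergent, uniformly bounded operators converge uniformly on compact sets. This gives $\|(I-\Pi_h)\mathcal{K}'\|\to0$.
\item Apply Vainikko's theorem on the convergence of projection methods for nonlinear equations \cite{Vainikko93}. For $h$ small, $u_h$ exists in a neighborhood of $u$ and is unique there, and
\[
\|u-u_h\|_\infty\le c\,\|u-\Pi_h u\|_\infty\to0,
\]
which gives \eqref{eq:conv_norm}.
\end{enumerate}
If one prefers to avoid Fréchet differentiability, the contraction bound $L_fK<1$ from Theorem~\ref{teounique} can replace it. Applied to $\Pi_h\mathcal{K}$, it yields
\[
\|u-u_h\|_\infty\le \frac{\|u-\Pi_h u\|_\infty}{1-\sup_h\|\Pi_h\|L_fK},
\]
provided the $\Pi_h$ are uniformly bounded.

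The main obstacle is the uniform boundedness $\sup_h\|\Pi_h\|_{\infty}<\infty$, that is, stability of Greville collocation for the Riesz--Caputo operator. This is where the symmetry of Proposition~\ref{th_symm} and the explicit B-spline derivatives of \cite{Pit:22} enter. My first attempt would be to show the collocation matrix $\bigl(D^\alpha_{RC}N_{k,h,n}(\eta_{p,h,n})\bigr)$, augmented by the boundary rows, is uniformly well conditioned after scaling by $h^{\alpha}$. The route is to use that $\mathcal{R}$ is a positive-definite, symmetric, smoothing operator on $[0,L]$ and that Greville interpolation in the B-spline space is uniformly bounded in $L^\infty$, which is de Boor's stability result \cite{dB78}. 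A second difficulty is the endpoint weight $\bigl((L-t)/t\bigr)^{(2-\alpha)/2}$ in $\mathcal{R}^{-1}$, which makes $u''$ singular at the endpoints. I would handle it by establishing the pointwise convergence in step 1 on the weighted space and using only density there, since convergence without a rate is all that \eqref{eq:conv_norm} requires.
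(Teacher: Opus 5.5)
Your route is genuinely different from the paper's: you set up a projection-method argument in the spirit of Vainikko and Pedas--Tamme, and you correctly locate the difficulty in stability. But that step is left open, and as you have set it up (in the $u$-variable, with the sup norm) it is false.

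Put $\mathcal K_0:=\mathcal V\mathcal R^{-1}$, the map $g\mapsto u$ solving $D^\alpha_{RC}u=g$ with the boundary conditions, and let $\mathcal L$ be its linear part. Your own definitions give $\Pi_h\mathcal K=\mathcal K_0P_h\mathcal F$, where $P_h$ interpolates at the interior Greville points from $W_h:=D^\alpha_{RC}(\mathcal S_{h,n})=\mathcal R(\mathcal S_{h,n-2})$. So $\Pi_h w$ is determined by the point values $D^\alpha_{RC}w(\eta_{p,h,n})$, and $\|w\|_\infty$ does not control these. A bump of height $1$ and width $\delta$, centred at a node and supported away from $0$ and $L$, has Riesz--Caputo derivative of exact order $\delta^{-\alpha}$ there. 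Hence $\|\Pi_h\|_\infty=\infty$ for every $h$, and $\sup_h\|\Pi_h\|_\infty<\infty$ cannot be proved.

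For the same reason Step~2 fails. The operator $(I-\Pi_h)\mathcal K'(u)=\mathcal L(I-P_h)\mathcal F'(u)$ has its compact factor on the wrong side. Since $(I-P_h)g=g$ whenever $g$ vanishes at the nodes, its norm stays bounded below by a positive constant independent of $h$, unless $\partial_uf(\cdot,u)\equiv0$.

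The argument must instead be run for $w=D^\alpha_{RC}u=f(\cdot,u)$. There the problem reads $w=Tw$ with $T:=\mathcal F\circ\mathcal K_0$ compact on $C[0,L]$, the scheme reads $w_h=P_hTw_h$, and $\|u-u_h\|_\infty\le\|\mathcal L\|\,\|w-w_h\|_\infty$. This also removes your endpoint-weight difficulty, since $w$ is continuous. What is then needed is:
(i) unisolvence of interpolation from $W_h$ at the interior Greville points, together with $\sup_h\|P_h\|_{C\to C}<\infty$;
(ii) $P_hg\to g$ for every $g\in C[0,L]$.

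Point (i) is the real content of the theorem, and nothing in your sketch delivers it. $W_h$ is not a spline space: it consists of Riesz potentials of splines of degree $n-2$, so de Boor's bounds for Greville interpolation in $\mathcal S_{h,n}$ do not apply. Positive definiteness of $\mathcal R$ is an $L^2$ (energy) property and says nothing about max-norm stability of point collocation. A uniformly conditioned scaled collocation matrix would not by itself bound $\|P_h\|$ either. Pedas and Tamme avoid this by putting the spline ansatz on $D^\alpha u$ rather than on $u$, so that the collocation space is a polynomial spline space. With the ansatz on $u$, as in \eqref{eq:coll_eq}, this stability question is nonstandard and has to be proved from scratch.

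Two smaller issues remain.
\begin{itemize}
\item Vainikko's theorem needs $f$ to be Fr\'echet differentiable in $u$, which is not assumed.
\item Your contraction fallback needs $\sup_h\|P_h\|\,L_fK<1$. This is stronger than the hypothesis $L_fK<1$, because a nonzero projection has norm at least $1$. With only Lipschitz $f$, use compactness of $T$ instead: it gives $\sup_{\|v-w\|_\infty\le r}\|(I-P_h)(Tv-Tw)\|_\infty\to0$, and Brouwer's theorem then gives existence of $w_h$.
\end{itemize}

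Finally, the paper's own proof does not supply the missing ingredient. It subtracts the integral equation of Lemma~\ref{lemma12} written for $u$ and for $u_h$. But $u_h$ satisfies \eqref{eq:main_eq} only at the collocation points, so it does not satisfy that integral equation. The stability estimate you are missing is exactly what that step tacitly assumes.
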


\begin{proof}
For simplicity, we omit the subscript $n$ in the spline basis.

Let $\bar{\eta}_h$ denote the Greville abscissa nearest to $x$. Then
\begin{equation}
\label{eq:error_split}
\| u(x) - u_h(x) \|_\infty \le 
\underbrace{\| u(x) - u_h(\bar{\eta}_h) \|_\infty}_{\text{discrete error}}
+
\underbrace{\| u_h(\bar{\eta}_h) - u_h(x) \|_\infty}_{\text{interpolation error}}.
\end{equation}

By continuity of $u_h$, the second term vanishes as $h \to 0$:
\[
\lim_{h\to 0} \| u_h(\bar{\eta}_h) - u_h(x) \|_\infty = 0.
\]

By Lemma~\ref{lemma12}, both $u$ and $u_h$ satisfy the integral equation
\begin{align*}
u(x) &= \frac{A}{\alpha-1} \int_0^x f(t,u(t)) (x-t)^{\alpha-1} dt \\
       & + \frac{B}{\alpha-1} \int_0^x h'(t)\left(\frac{L - \tau}{\tau}\right)^{\frac{2 - \alpha}{2}} (x-t)^{\alpha-1} dt 
       + C_1 x + C_0,
\end{align*}
with analogous expression for $u_h$ (with $f(t,u_h(t))$ and $h'(t,u_h)$).  
Subtracting the two equations and taking absolute values:
\begin{align*}
|u(x)-u_h(\bar{\eta}_h)| 
&\le \frac{|A|}{\alpha-1} \int_0^x |f(t,u(t)) - f(t,u_h(t))| (x-t)^{\alpha-1} dt \\
&\quad + \frac{|B|}{\alpha-1} \int_0^x |h'(t,u) - h'(t,u_h)|\left(\frac{L - \tau}{\tau}\right)^{\frac{2 - \alpha}{2}} (x-t)^{\alpha-1} dt \\
&\quad + |C_1(u)-C_1(u_h)| \, |x| + |C_0(u)-C_0(u_h)|.
\end{align*}
By the Lipschitz continuity of $f$, 
\[
\| f(\cdot,u) - f(\cdot,u_h) \|_\infty \le L_f \| u - u_h \|_\infty.
\]

Moreover, the mappings $g\mapsto h'$ and the linear combinations appearing in $C_0,C_1$ are linear and bounded operators on $L^\infty$ (see discussion in Section~\ref{subsec2}).  
Hence, there exists a constant $K>0$, independent of $h$, such that
\[
|u(x)-u_h(\bar{\eta}_h)| \le K L_f \| u - u_h \|_\infty + K_2 |x-\bar{\eta}_h|.
\]
Taking supremum over $x \in [0,L]$:
\[
\| u - u_h(\bar{\eta}_h) \|_\infty \le K L_f \| u - u_h \|_\infty + K_2 \max_x |x-\bar{\eta}_h|.
\]

By Theorem~\ref{teounique}, $L_f$ is sufficiently small so that $K L_f < 1$. Then
\[
(1 - K L_f) \| u - u_h(\bar{\eta}_h) \|_\infty \le K_2 \max_x |x-\bar{\eta}_h|.
\]
As $h\to 0$, $\max_x |x-\bar{\eta}_h| \to 0$ (Greville points become dense), so the right-hand side vanishes. Together with Step 1, we conclude
\[
\lim_{h\to 0} \| u - u_h \|_\infty = 0.
\]
\end{proof}

\section{Numerical experiments}
\label{sec4}
In this section, we validate the theoretical results through numerical experiments. Without loss of generality, we set $L=1$ . For the numerical tests, we utilize quadratic and cubic optimal B-splines, which offer flexibility in representing the solution with different levels of accuracy and computational cost.  The nonlinear square system (\ref{eq:coll_eq}) is solved using  MATLAB's \texttt{solve} function, which is specifically designed for efficiently handling nonlinear systems.\\
%

\subsection{Example 1}
In the first example, we solve the nonlinear fractional boundary value problem

\begin{equation*}
	\left \{ \begin{array}{lll}
		D_{RC}^\alpha u(x)=u(x)^2+g_1(x),  \ \ \ x \in [0,1], \\ \\
		u(0)+u(1)=1, \\ \\
        u'(0)+u'(1)=2.5,
	\end{array}
	\right.
\end{equation*}
where $g_1(x)$ is given in \ref{app1}. The analytical solution of this problem is $u(x)=x^2\sqrt{x}$. 
The values of the infinity norm of the error $E_{h,n}$ for $n=2$ and  $n=3$ are listed in Table \ref{table1_1} and Table \ref{table1_2}, respectively.\\
\begin{flushleft}
	\begin{center}
		\begin{table}[H] 
			\caption{Example 1: $\|E_{h,n}\|_\infty$ for different values of $\alpha$ and $h$  with $n=2$.\label{table1_1}}
			\begin{adjustwidth}{1.5cm}{0cm}
				\newcolumntype{C}{>{\centering\arraybackslash}X}
				\begin{tabular}{c|c|c|c|c}
					\textbf{$h /  \alpha$}	& \textbf{1.25}	& \textbf{1.5} & \textbf{1.75} & \textbf{1.95}\\
					\hline
					$\frac 1 { 8}$ & $   3.98e-03$  &$    2.93e-03$  & $   3.33e-03$  & $   3.95e-03$  \\ 
					\hline
					$\frac 1 {16}$ & $   1.09e-03$  &$    8.00e-04$  & $   9.93e-04$  & $   1.25e-03$  \\ 
					\hline
					$\frac 1 {32}$ & $   2.93e-04$  &$    2.20e-04$  & $   3.01e-04$  & $   4.01e-04$  \\ 
					\hline
					$\frac 1 {64}$ & $   7.87e-05$  &$    6.12e-05$  & $   9.25e-05$  & $   1.31e-04$  \\ 
				\end{tabular}
			\end{adjustwidth}
		\end{table}
		\unskip
	\end{center}
\end{flushleft}

\begin{flushleft}
	\begin{center}
		\begin{table}[H] 
			\caption{Example 1:$\|E_{h,n}\|_\infty$ for different values of $\alpha$ and $h$  with $n=3$.\label{table1_2}}
			\begin{adjustwidth}{1.5cm}{0cm}
				\newcolumntype{C}{>{\centering\arraybackslash}X}
				\begin{tabular}{c|c|c|c|c}
					\textbf{$h /  \alpha$}	& \textbf{1.25}	& \textbf{1.5} & \textbf{1.75} & \textbf{1.95}\\
					\hline
				$\frac 1 { 8}$ & $   6.33e-04$  &$    4.28e-04$  & $   4.34e-04$  & $   5.93e-04$  \\ 
				\hline
				$\frac 1 {16}$ & $   1.60e-04$  &$    1.06e-04$  & $   9.23e-05$  & $   1.65e-04$  \\ 
				\hline
				$\frac 1 {32}$ & $   4.08e-05$  &$    2.69e-05$  & $   1.84e-05$  & $   5.46e-05$  \\ 
				\hline 
				$\frac 1 {64}$ & $   1.06e-05$  &$    7.00e-06$  & $   4.12e-06$  & $   2.05e-05$  \\ 
				\end{tabular}
			\end{adjustwidth}
		\end{table}
		\unskip
	\end{center}
\end{flushleft}
\subsection{Example 2}
In this example, we approximate the solution of the following fractional problem
\begin{equation*}
	\label{eq:ex2}
	\left \{ \begin{array}{lll}
		D_{RC}^\alpha u(x)=\sin \bigl(\frac{\pi}4  u(x) \bigr)+g_2(x),  \ \ \ x \in [0,1], \\ \\
		u(0)+u(1)= \frac{\sqrt{2}}{2}, \\ \\
		u'(0)+u'(1)=\frac{ \pi}{4}\left(1+\frac{\sqrt{2}}{2} \right),
	\end{array}
	\right.
\end{equation*}
where $g_2(x)$ is given in \ref{app1}.  In this case the analytical solution is the periodic function  $u(x)=\sin\bigl(\frac{\pi}4 x \bigl)$. The values of the infinity norm of the error $E_{h,n}$ are reported in Tables \ref{table2_1}-\ref{table2_2}.
\begin{flushleft}
	\begin{center}
		\begin{table}[H] 
			\caption{Example 2: $\|E_{h,n}\|_\infty$ for different values of $\alpha$ and $h$  with $n=2$.\label{table2_1}}
			\begin{adjustwidth}{1.5cm}{0cm}
				\newcolumntype{C}{>{\centering\arraybackslash}X}
				\begin{tabular}{c|c|c|c|c}
					\textbf{$h /  \alpha$}	& \textbf{1.25}	& \textbf{1.5} & \textbf{1.75} & \textbf{1.95}\\
					\hline
					$\frac 1 { 8}$ & $   3.47e-04$  &$    2.14e-04$  & $   2.45e-04$  & $   2.87e-04$  \\ 
					\hline
					$\frac 1 {16}$ & $   7.06e-05$  &$    4.54e-05$  & $   5.64e-05$  & $   7.06e-05$  \\ 
					\hline
					$\frac 1 {32}$ & $   1.40e-05$  &$    9.58e-06$  & $   1.29e-05$  & $   1.73e-05$  \\ 
					\hline 
					$\frac 1 {64}$ & $   2.73e-06$  &$    2.01e-06$  & $   2.96e-06$  & $   4.26e-06$  \\ 
				\end{tabular}
			\end{adjustwidth}
		\end{table}
		\unskip
	\end{center}
\end{flushleft}

\begin{flushleft}
	\begin{center}
		\begin{table}[H] 
			\caption{Example 2: $\|E_{h,n}\|_\infty$ for different values of $\alpha$ and $h$ with $n=3$.\label{table2_2}}
			\begin{adjustwidth}{1.5cm}{0cm}
				\newcolumntype{C}{>{\centering\arraybackslash}X}
				\begin{tabular}{c|c|c|c|c}
					\textbf{$h /  \alpha$}	& \textbf{1.25}	& \textbf{1.5} & \textbf{1.75} & \textbf{1.95}\\
					\hline
					$\frac 1 { 8}$ & $   2.84e-06$  &$    4.60e-06$  & $   1.05e-05$  & $   2.18e-05$  \\ 
					\hline
					$\frac 1 {16}$ & $   5.81e-07$  &$    8.67e-07$  & $   2.37e-06$  & $   5.69e-06$  \\ 
					\hline
					$\frac 1 {32}$ & $   1.09e-07$  &$    1.64e-07$  & $   5.20e-07$  & $   1.43e-06$  \\ 
					\hline
					$\frac 1 {64}$ & $   1.89e-08$  &$    3.04e-08$  & $   1.12e-07$  & $   3.52e-07$  \\ 
				\end{tabular}
			\end{adjustwidth}
		\end{table}
		\unskip
	\end{center}
\end{flushleft}

In all the presented numerical example we have verified the convergence theoretically proved in section \ref{sec3}. 


\section{Conclusions}
\label{sec5}
In this paper we study a nonlinear fractional boundary value problem involving the Riesz-Caputo operator. We prove existence and uniqueness of the solution by showing the equivalence with a Fredholm integral equation, and then using the Schauder's and the Banach fixed point theorems.
We also present a collocation method that project the solution in a spline space. The approximate solution is represented as a linear combination of optimal B-spline bases, whose coefficients are determined by solving a nonlinear square system. The convergence analysis of such method is provided with theoretical insights. The numerical experiments confirm the theoretical findings.  As outlined in Section \ref{sec1}, the available literature involving the Riesz-Caputo operator is very limited, especially when dealing with nonlinear problems. In this work we have rigorously proved the convergence of the numerical method.  A detailed analysis of the order of convergence is still missing and will be the subject of future studies.
\newpage
\appendix
\section{Determination of the constants \(C_0\) and \(C_1\)}
\label{app2}

We derive explicitly the formulas for \(C_0\) and \(C_1\) appearing in Lemma~\ref{lemma12}. Recall the representation
\begin{align}\label{eq:A1}
	u(x)&=\frac{A}{\alpha-1}\int_a^x f(s,u(s))(x-s)^{\alpha-1}\,ds \nonumber \\
	&+\frac{B}{\alpha-1}\int_a^x
	h'(s)\left(\frac{b-s}{s-a}\right)^{\frac{2-\alpha}{2}}
	(x-s)^{\alpha-1}\,ds
	+C_1(x-a)+C_0,
\end{align}
and
\begin{align}\label{eq:A2}
	u'(x)&= A \int_a^x \frac{f(s,u(s))}{(x-s)^{2-\alpha}}\,ds
	+
	B \int_a^x
	\frac{
		\left(\dfrac{b-\tau}{\tau-a}\right)^{\frac{2-\alpha}{2}}
	}{
		(x-\tau)^{2-\alpha}
	}
	\,h'(\tau)\,d\tau
	+ C_1,
\end{align}
where $A = - \frac{1}{\pi} \tan \left( \frac{\alpha \pi}{2} \right)$, $B= 2\frac{\sin^2 \left(\frac{\alpha \pi}{2} \right)}{\pi^2}$,
\begin{equation}
	h(t)= \int_a^t \frac{1}{(t-r)^{\alpha-1}} F(r) dr.
\end{equation}
with $F(r)=\int_r^b \left( \frac{b-s}{s-a} \right)^{-(2-\alpha)/2} \frac{f(s,u(s))}{(s-r)^{2-\alpha}} ds$.\\

\subsection*{Value at \(x=a\):}
\begin{align*}
	u(a)&=C_0,
\end{align*}

\subsection*{Value at \(x=b\):}
\begin{align*}
	u(b)&=\frac{A}{\alpha-1}\int_a^b f(s,u(s))(b-s)^{\alpha-1}\,ds \nonumber \\
&+\frac{B}{\alpha-1}\int_a^b
h'(s)\left(\frac{b-s}{s-a}\right)^{\frac{2-\alpha}{2}}
(b-s)^{\alpha-1}\,ds
+C_1(b-a)+C_0 \\
&=\frac{A}{\alpha-1} I_1+\frac{B}{\alpha-1} I_2 +C_1(b-a)+C_0
\end{align*}
setting 
\begin{align}
	I_{1}&=\int_{a}^{b}f(s,u(s))(b-s)^{\alpha-1}\,ds, \label{I1} \\
	I_{2}&=\int_{a}^{b}h'(\tau)(b-\tau)^{\alpha/2}(\tau-a)^{-(2-\alpha)/2}\,d\tau. \label{I2}
\end{align}

\subsection*{Derivatives at the endpoints:}
We use \eqref{eq:A2}. First,

\subsection*{Derivative at \(x=a\):}
\begin{align*}
	u'(a)&= C_1,
\end{align*}

\subsection*{Derivative at \(x=b\):}
\begin{align*}
	u'(b)&= A \int_a^b \frac{f(s,u(s))}{(b-s)^{2-\alpha}}\,ds
	+
	B \int_a^b
	\frac{
		\left(\dfrac{b-\tau}{\tau-a}\right)^{\frac{2-\alpha}{2}}
	}{
		(b-\tau)^{2-\alpha}
	}
	\,h'(\tau)\,d\tau
	+ C_1,
\end{align*}
and denoting

\begin{align*}
	J_{1}&=\int_{a}^{b}\frac{f(s,u(s))}{(b-s)^{2-\alpha}}\,ds, \\
	J_{2}&=\int_{a}^{b}h'(\tau)(b-\tau)^{(\alpha-2)/2}(\tau-a)^{-(2-\alpha)/2}\,d\tau. 
\end{align*}
We have
\begin{align*}
	u'(b)&=- A J_1+	B J_2+ C_1,
\end{align*}

\subsection*{Linear system:}
The boundary conditions are
\begin{align*}
a_0 u(a) + b_0 u(b) &= c_0, \\
a_1 u'(a) + b_1 u'(b) &= c_1.
\end{align*}
Substituting the expressions above gives
\begin{align*}
	a_{0}C_{0}+b_{0}\Bigl[\frac{A}{\alpha-1}I_{1}+\frac{B}{\alpha-1}I_{2}+C_{1}(b-a)+C_{0}\Bigr]&=c_{0},\\
	a_{1}C_{1}+b_{1}\bigl[A J_{1}+B J_{2}+C_{1}\bigr]&=c_{1}
\end{align*}
hence
\begin{align}
	(a_{0}+b_{0})C_{0}+b_{0}(b-a)C_{1}+b_{0}\Bigl[\frac{A}{\alpha-1}I_{1}+\frac{B}{\alpha-1}I_{2}\Bigr]&=c_{0}, \label{sys1}\\
	(a_{1}+b_{1})C_{1}+b_{1}\bigl[A J_{1}+B J_{2}\bigr]&=c_{1}. \label{sys2}
\end{align}
Equation \eqref{sys2} becomes
\[
C_{1}= \frac{c_{1}-b_{1}\bigl(A J_{1}+B J_{2}\bigr)}{a_{1}+b_{1}} .
\]
Substituting into \eqref{sys1} 
\[
C_{0}= \frac{c_{0}-b_{0}\Bigl[\frac{A}{\alpha-1}I_{1}+\frac{B}{\alpha-1}I_{2}\Bigr]-b_{0}(b-a)C_{1}}{a_{0}+b_{0}} .
\]
Setting $a=0$ and $b=L$, we obtain  exactly formulas \eqref{eq:A}--\eqref{eq:B} of Lemma~\ref{lemma12}.

\section{Example Appendix Section}
\label{app1}
Here we list the analytical expressions of the known terms \( g_i(x)\), for $i=1,2$ used in the numerical tests of Section \ref{sec4}. \\
In the following we denote by $_pF_q(a;b;z)$ the generalized hypergeometric function.

\begin{enumerate}
		\item   In the first example, the function \( g_1(x)\) is given by 
	\begin{align*}
		g_1(x)& =-x^{5}+\frac{1}{2}  \left[\frac{\Gamma(3.5)}{\Gamma(3.5-\alpha)}- \frac{3.75}{2\sqrt\pi} \Gamma(\alpha-2.5)\right] x^{2.5- \alpha}\\
		&-\frac{1}{2}\frac{3.75}{\Gamma(2- \alpha)}\frac{L^{2.5-\alpha}}{(\alpha-2.5)} \, _2F_1\left(\alpha-2.5,\alpha-1;\alpha-1.5;\frac{x}{L}\right)\text{for } 1 < \alpha < 2.
			\end{align*}
	\item    
    In the second example, we use	$$g_2(x) =-\sin(\sin(\omega x))-\frac{\omega^2}{2\Gamma(2-\alpha)} \left(g_{11}(x)+g_{21}(x)\right)$$ 
    where
		\begin{align*}
			g_{11}(x)&=\frac{\omega x^{3-\alpha} \, _1F_2\left(1;2-\frac{\alpha}{2},\frac{5}{2}-\frac{\alpha}{2};-\frac{1}{4} \omega^2 x^2\right)}{(\alpha-3)(\alpha-2)},\\
			g_{21}(x)&= (L-x)^{2-\alpha} \frac{\omega (x-L) \cos (\omega x) \, _1F_2\left(\frac{3}{2}-\frac{\alpha}{2};\frac{3}{2},\frac{5}{2}-\frac{\alpha}{2};-\frac{1}{4} \omega^2 (L-x)^2\right)}{\alpha-3}\\
			&-(L-x)^{2-\alpha}\frac{\sin (\omega x) \, _1F_2\left(1-\frac{\alpha}{2};\frac{1}{2},2-\frac{\alpha}{2};-\frac{1}{4} \omega^2 (L-x)^2\right)}{\alpha-2},
		\end{align*} 
	and $\omega=\frac{ \pi}{4}$. 
\end{enumerate}

\section*{Acknowledgements} 
  The authors are members of the INdAM Research Group GNCS.

 \section*{\small
 Conflict of interest} 

 {\small
 The authors declare that they have no conflict of interest.}

\bigskip  


\newpage

\begin{thebibliography}{00}


\bibitem{Ag07}
Agrawal, O.P.
\newblock Fractional variational calculus in terms of Riesz fractional derivatives.
\newblock \emph{Journal of Physics A: Mathematical and Theoretical}, 40,~6287, 2007.

\bibitem{BDST16}
Baleanu, D.; Diethelm, K.; Scalas, E.; Trujillo, J.J.
\newblock \emph{Fractional Calculus: Models and Numerical Methods.}
\newblock World Scientific, Singapore, 2016.
	
\bibitem{Brunner}
Brunner, H; Pedas,  A; Vainikko ,G.
\newblock \emph{Piecewise polynomial collocation methods for linear Volterra integro-differential equations with weakly singular kernels.}
\newblock SIAM J. Numer. Anal. 39, pp. 957-982, 2001.

\bibitem{dB78}
de~Boor, C.
\newblock \emph{A practical guide to splines.}
\newblock Springer-Verlag,  1978.

\bibitem{Di10}
Diethelm, K.
\newblock \emph{The analysis of fractional differential equations: An
application-oriented exposition using differential operators of Caputo type.}
\newblock Springer Science \& Business Media,  2010.

\bibitem{Hi00}
Hilfer, R.
\newblock {\em Applications of Fractional Calculus in Physics.} 
\newblock World Scientific, Singapore, 2000. 

\bibitem{KST06}
Kilbas, A.A.; Srivastava, H.M.; Trujillo, J.J.
\newblock \emph{Theory and Applications of Fractional Differential Equations.} 
\newblock Elsevier Science, Amsterdam, The Netherlands, 2006.





\bibitem{Ma06}
Magin, R.L.
\newblock \emph{Fractional Calculus in Bioengineering.}
\newblock Begell House, Danbury, CT, USA, 2006.

\bibitem{Ma10}
Mainardi, F.
\newblock \emph{Fractional Calculus and Waves in Linear Viscoelasticity: An Introduction to Mathematical Models.}
\newblock World Scientific, Singapore, 2010.

\bibitem{MK00}
Metzler, R.; Klafter, J.
\newblock The random walk's guide to anomalous diffusion: A fractional dynamics approach.
\newblock {\em Phys. Rep.}, 339, 1--77, 2000.

\bibitem{Rieszcos}
Partohaghighi, M.; Asante-Asamani, E.; Iyiola, O.
\newblock A robust numerical scheme for solving Riesz-tempered fractional reaction–diffusion equations.
\newblock \emph{Journal of Computational and Applied Mathematics} 450, 2024.

\bibitem{Pedas2011}
Pedas,  A; Tamme,  E.
\newblock On the convergence of spline collocation methods for solving fractional differential equations.
\newblock \emph{Journal of Computational and Applied Mathematics} 235,  pp. 3502-3514, 2011.

\bibitem{Pedas2011_2}
Pedas,  A; Tamme,  E.
\newblock Spline collocation methods for linear multi-term fractional differential equations.
\newblock \emph{Journal of Computational and Applied Mathematics} 236,  pp. 167-176, 2011.

\bibitem{Pedas2012}
Pedas,  A; Tamme,  E.
\newblock Piecewise polynomial collocation for linear boundary value problems of fractional differential equations.
\newblock \emph{Journal of Computational and Applied Mathematics} 236, pp. 3349-3359, 2012

\bibitem{Pedas2014}
Pedas,  A; Tamme,  E.
\newblock Numerical solution of nonlinear fractional differential equations by spline collocation methods.
\newblock \emph{Journal of Computational and Applied Mathematics} 255, pp. 216-230, 2014



\bibitem{PPP:20}
Pellegrino,  E; Pezza,  L.; Pitolli, F.
\newblock Quasi-interpolant operators and the solution of fractional differential problems.
\newblock \emph{Approximation Theory XVI. Nashville 2019}, 2020.


\bibitem{Pi18-b}
Pitolli, F.
\newblock Optimal B-spline bases for the~numerical solution of fractional
differential problems.
\newblock \emph{Axioms}, 7, 46, 2018.

\bibitem{Pitolli_nl1}
Pitolli, F.
\newblock A Collocation Method for the Numerical Solution of Nonlinear Fractional Dynamical Systems.
\newblock \emph{Algorithms} 12(8), 2019.

\bibitem{Pitolli_nl2}
Pellegrino, E.; Pitolli, F..
\newblock Applications of Optimal Spline Approximations for the Solution of Nonlinear Time-Fractional Initial Value Problems.
\newblock \emph{Axioms} 10(4), 249, 2021.


\bibitem{Pit:22}
Pitolli, F.; Sorgentone,  C.; Pellegrino, E.
\newblock Approximation of the Riesz–Caputo derivative by cubic splines.
\newblock \emph{Algorithms} 15(2), 69, 2022.

\bibitem{Po99}
Podlubny, I.
\newblock \emph{Fractional differential equations: An introduction to fractional
derivatives, fractional differential equations, to methods of their solution
and some of their applications.}
\newblock Academic Press,  1999.




\bibitem{Sb05}
Sablonnière, P. 
\newblock Recent progress on univariate and multivariate polynomial and spline quasi-interpolants.
\newblock In \emph{Trends and Applications in Constructive Approximation}; De Bruin, M., Mache, D., Szabados, J., Eds.; Birkhäuser
Verlag: Basel, Switzerland, 2005; Volume 177, pp. 229–245.

\bibitem{Samko}
Samko, S.G. ; Kilbas,  A.A. ; Marichev O.I.
\newblock Fractional Integrals and Derivatives. Theory and Applications,.
\newblock In \emph{Gordon and Breach Science Publishers}, Yverdon, 1993.


\bibitem{Sc07}
Schumaker, L.
\newblock \emph{Spline functions: Basic theory.}
\newblock Cambridge University Press, 2007.

\bibitem{Sor15}
Sorgentone, C.; Pellegrino, E.; Pitolli, F.
\newblock \emph{A spline-based framework for solving the space–time fractional
convection–diffusion problem}
\newblock Applied Mathematics Letters 161, 2025


\bibitem{Vainikko93}
Vainikko G.
\newblock \emph{Multidimensional Weakly Singular Integral Equations.}
\newblock Lecture Notes in Mathematics, vol. 1549, Springer 1993

\bibitem{Za02}
Zaslavsky, G.M.
\newblock Chaos, fractional kinetics, and anomalous transport.
\newblock \emph{Phys. Rep.} 371,~461--580, 2002.







\end{thebibliography}
\end{document}